\newcommand{\tnorm}{\@ifstar\@tnorms\@tnorm}
\newcommand{\@tnorms}[1]{%
  \left|\mkern-1.5mu\left|\mkern-1.5mu\left|
   #1
  \right|\mkern-1.5mu\right|\mkern-1.5mu\right|
}
\newcommand{\@tnorm}[2][]{%
  \mathopen{#1|\mkern-1.5mu#1|\mkern-1.5mu#1|}
  #2
  \mathclose{#1|\mkern-1.5mu#1|\mkern-1.5mu#1|}
}
\newcommand{\jump}[1]{\llbracket #1 \rrbracket}
\DeclareMathOperator{\Ker}{Ker}
\DeclareMathOperator{\Image}{Im}
\title{Preconditioning for a pressure-robust HDG discretization of the Stokes equations
  \thanks{\funding{SR gratefully acknowledges support from the Natural
      Sciences and Engineering Research Council of Canada through the
      Discovery Grant program (RGPIN-05606-2015).}}}
\author{Sander Rhebergen\thanks{Department of Applied Mathematics,
    University of Waterloo, Canada (\url{srheberg@uwaterloo.ca}),
    \url{http://orcid.org/0000-0001-6036-0356}} \and
  Garth~N.~Wells\thanks{Department of Engineering, University of
    Cambridge, United Kingdom (\url{gnw20@cam.ac.uk}),
    \url{https://orcid.org/0000-0001-5291-7951}}}
\begin{document}
\maketitle
\begin{abstract}
  We introduce a new preconditioner for a recently developed
  pressure-robust hybridized discontinuous Galerkin (HDG) finite
  element discretization of the Stokes equations. A feature of HDG
  methods is the straightforward elimination of degrees-of-freedom
  defined on the interior of an element. In our previous work
  (J. Sci. Comput., 77(3):1936--1952, 2018) we introduced a
  preconditioner for the case in which only the degrees-of-freedom
  associated with the element velocity were eliminated via static
  condensation. In this work we introduce a preconditioner for the
  statically condensed system in which the element pressure
  degrees-of-freedom are also eliminated. In doing so the number of
  globally coupled degrees-of-freedom are reduced, but at the expense
  of a more difficult problem to analyse. We will show, however, that
  the Schur complement of the statically condensed system is
  spectrally equivalent to a simple trace pressure mass matrix. This
  result is used to formulate a new, provably optimal
  preconditioner. Through numerical examples in two- and
  three-dimensions we show that the new preconditioned iterative
  method converges in fewer iterations, has superior conservation
  properties for inexact solves, and is faster in CPU time when
  compared to our previous preconditioner.
\end{abstract}
\begin{keywords}
  Stokes equations, preconditioning, hybridized, discontinuous
  Galerkin, finite element methods.
\end{keywords}
\begin{AMS}
  65F08, 
  65N30, 
  76D07. 
\end{AMS}
\section{Introduction}
\label{sec:introduction}

Hybridizable discontinuous Galerkin (HDG) methods were introduced for
elliptic problems to reduce cost of discontinuous Galerkin methods while
retaining favourable properties~\cite{Cockburn:2009a}. Static
condensation can be applied to eliminate degrees-of-freedom on cells,
leading to global degrees-of-freedom that are associated with functions
that are defined only on the facets of the mesh. The resulting methods
have fewer global degrees of freedom than a discontinuous Galerkin
method on the same mesh, especially in three dimensions.

For large systems, preconditioned iterative methods are preferred over
direct solvers. In the case of HDG methods, new preconditioners need to
be designed that are effective for the reduced linear systems following
static condensation. This has been a topic of recent research with the
design of scalable multigrid and domain decomposition methods for HDG
discretizations of elliptic PDEs, see for example \cite{cockburn:2014,
Gander:2015, Kronbichler:2018, Muralikrishnan:2020}. See also
\cite{Muralikrishnan:2017, Muralikrishnan:2018} for HDG preconditioners
for hyperbolic problems and \cite{Dobrev:2019} and \cite{Wildey:2019}
for preconditioning of statically condensed and hybridized finite
element discretizations by, respectively, algebraic multigrid and
geometric multigrid.

In this paper we develop a new preconditioner for the pressure-robust
HDG discretization of the Stokes equations introduced in
\cite{Rhebergen:2018a, Rhebergen:2017}. Pressure-robust discretizations
have the advantage over other finite element discretizations that
the~\emph{a priori} error estimates for the velocity do not depend on
the pressure. We refer to, respectively, \cite{John:2017} and
\cite{Lederer:2017, Lederer:2018, Lederer:2019, Lehrenfeld:2016} for an
overview of pressure-robust discretizations and alternative
pressure-robust HDG discretizations. In~\cite{Rhebergen:2018b} we
developed a preconditioner for a pressure-robust HDG discretization of
the Stokes problem, and showed that the pressure Schur complement of the
linear system obtained after eliminating only the degrees-of-freedom
associated to the element velocity is identical to the pressure Schur
complement of the original linear system. Together with a proof showing
spectral equivalence of the element/trace pressure Schur complement with
an element/trace pressure mass matrix, we were able to formulate a very
simple scalable preconditioner when the velocity only was statically
condensed. However, a disadvantage of eliminating the cell-wise velocity
degrees-of-freedom and not the cell-wise pressure degrees-of-freedom is
that the pointwise divergence-free (within cells) property of the method
is guaranteed only in the limit of the iterative solver convergence. We
address this issue in this work by formulating a preconditioner for a
statically condensed system after elimination of \emph{both} the
degrees-of-freedom associated to the element velocity \emph{and} the
degrees-of-freedom associated to the element pressure. The complication
that arises in eliminating the degrees-of-freedom associated to the
element pressure is that the lifting of the trace velocity unknowns to
the element is divergence-free. This requires a new proof to show
spectral equivalence of the trace pressure Schur complement with the
trace pressure mass matrix. Given this result we can then apply the
general theory of Pestana and Wathen \cite{Pestana:2015} for
preconditioners of saddle point problems to formulate a new scalable
preconditioner for the statically condensed problem. Compared to our
previous work \cite{Rhebergen:2018b}, there are less globally coupled
degrees-of-freedom and our new preconditioner results in a more
efficient solver.

The outline of this paper is as follows. In \cref{sec:hdg_stokes} we
present the HDG method for the Stokes equations and discuss some useful
properties of the discretization. A preconditioner for the statically
condensed form of the HDG method is formulated in
\cref{sec:preconditioning}. By two- and three-dimensional numerical
examples in \cref{sec:num_examples} we verify our analysis. We draw
conclusions in \cref{sec:conclusions}.

\section{HDG for the Stokes problem}
\label{sec:hdg_stokes}

Let $\Omega\subset\mathbb{R}^d$ be a polygonal ($d = 2$) or polyhedral
($d = 3$) domain with boundary $\partial\Omega$. We consider the Stokes
problem: given a body force $f : \Omega \to \mathbb{R}^d$, find the
velocity $u : \Omega \to \mathbb{R}^d$ and (kinematic) pressure $p :
\Omega \to \mathbb{R}$ such that
\begin{subequations}
  \label{eq:stokes}
  \begin{align}
    -\nabla^2u + \nabla p &= f & & \text{in } \Omega,
    \\
    \nabla\cdot u &= 0 & & \text{in } \Omega,
    \\
    u &= 0 & & \text{on } \partial\Omega.
  \end{align}
\end{subequations}
To obtain a unique solution to the pressure, we additionally impose that
the mean of the pressure over the domain $\Omega$ is zero.

\subsection{Preliminaries}

Consider a triangulation $\mathcal{T} := \{K\}$ of the domain $\Omega$
into non-overlapping elements $K$. We introduce the following function
spaces on $\Omega$:
\begin{equation}
  \begin{split}
    V_h  &:= \cbr[1]{v_h\in \sbr[0]{L^2(\Omega)}^d
        : \ v_h \in \sbr[0]{P_k(K)}^d, \ \forall\ K\in\mathcal{T}},
    \\
    Q_h &:= \cbr[1]{q_h\in L^2(\Omega) : \ q_h \in P_{k-1}(K) ,\
    \forall \ K \in \mathcal{T}},
  \end{split}
  \label{eqn:spaces_cell}
\end{equation}
where $P_k(D)$ denotes the set of polynomials of degree at most~$k$ on a
domain~$D$. Two adjacent elements $K^+$ and $K^-$ share an interior
facet $F$. A boundary facet is defined as a facet of the boundary of an
element, $\partial K$, that lies on $\partial\Omega$. Denoting the set
of all facets by $\mathcal{F} = \{F\}$, and the union of all facets by
$\Gamma^0$, we introduce the following function spaces on $\Gamma^0$:
\begin{equation}
  \begin{split}
    \bar{V}_h &:= \cbr[1]{\bar{v}_h \in \sbr[0]{L^2(\Gamma^0)}^d:\ \bar{v}_h \in
      \sbr[0]{P_{k}(F)}^d\ \forall\ F \in \mathcal{F},\ \bar{v}_h
      = 0 \ \mbox{on}\ \partial\Omega},
    \\
    \bar{Q}_h &:= \cbr[1]{\bar{q}_h \in L^2(\Gamma^0) : \ \bar{q}_h \in
      P_{k}(F) \ \forall\ F \in \mathcal{F}}.
  \end{split}
  \label{eqn:spaces_facet}
\end{equation}
For convenience, we introduce the spaces
$\boldsymbol{V}_h := V_h \times \bar{V}_h$,
$\boldsymbol{Q}_h := Q_h \times \bar{Q}_h$, and
$\boldsymbol{X}_h := \boldsymbol{V}_h \times
\boldsymbol{Q}_h$. Function pairs in $\boldsymbol{V}_h$ and
$\boldsymbol{Q}_h$ will be denoted by boldface, e.g.,
$\boldsymbol{v}_h := (v_h, \bar{v}_h) \in \boldsymbol{V}_h$ and
$\boldsymbol{q}_h := (q_h, \bar{q}_h) \in \boldsymbol{Q}_h$.

For scalar-valued functions $p$ and $q$, we write
\begin{equation}
  (p, q)_{\mathcal{T}} := \sum_{K \in \mathcal{T}}(p, q)_K, \qquad
  \langle p, q \rangle_{\partial\mathcal{T}} := \sum_K \langle p, q \rangle_{\partial K},
\end{equation}
where $(p, q)_K := \int_{K} p q \dif x$ and
$\langle p, q \rangle_{\partial K} := \int_{\partial K} p q \dif
s$. Similar inner-products hold for vector-valued functions.

We also introduce the following mesh-dependent norms:
\begin{subequations}
  \begin{align}
    \label{eq:def_norm_facet_L2}
    \norm{\bar{v}_h}_{h,0}^2 &:= \sum_{K\in\mathcal{T}_h}h_{K}
                               \norm{\bar{v}_h}_{\partial K}^2
    && \forall \bar{v}_h \in \bar{V}_h,
    \\
    \label{eq:def_norm_hK_ap}
    \tnorm{\bar{v}_h}_{h}^2 &:= \sum_{K\in\mathcal{T}_h}h_{K}^{-1}
                              \norm{\bar{v}_h - m_K(\bar{v}_h)}_{\partial K}^2
    && \forall \bar{v}_h \in \bar{V}_h,
    \\
    \label{eq:stability_norm}
    \tnorm{ \boldsymbol{v}_h }_v^2 &:= \sum_{K\in\mathcal{T}}\norm{\nabla v_h }^2_{K}
                              + \sum_{K\in\mathcal{T}} \alpha  h_K^{-1} \norm{\bar{v}_h - v_h}^2_{\partial K}
    && \forall \boldsymbol{v}_h \in \boldsymbol{V}_h,
    \\
    \label{eq:def_normbarq}
    \norm{\bar{q}_h}_{p}^2 &:= \sum_{K\in\mathcal{T}}h_K\norm{\bar{q}_h}^2_{\partial K}
    && \forall \bar{q}_h \in \bar{Q}_h,
    \\
    \label{eq:def_normq}
    \tnorm{\boldsymbol{q}_h}^2_p &:= \norm{q_h}^2_{\Omega} + \norm{\bar{q}_h}_{p}^2
    && \forall \boldsymbol{q}_h \in \boldsymbol{Q}_h,
  \end{align}
\end{subequations}
where $h_K$ is the length measure of an element $K$, $\alpha > 0$ is a
constant, and
$m_K(\bar{v}_h) := \frac{1}{|\partial K|}\int_{\partial K} \bar{v}_h
\dif s$. We furthermore have the following Poincar\'e-type inequality
(see \cite[Lemma 3.7]{cockburn:2014} and \cite[Proof of
Theorem~2.3]{Gopalakrishnan:2003}) for the norms on $\bar{V}_h$:
\begin{equation}
  \label{eq:Poincare_facet}
  \norm{\bar{v}_h}_{h,0} \le c \tnorm{\bar{v}_h}_h \quad \forall \bar{v}_h \in \bar{V}_h.
\end{equation}

We will use the following reduced version of
\cite[Theorem~3.1]{Howell:2011}.
\begin{theorem}
  \label{thm:howell}
  Let $U$, $P_1$, and $P_2$ be reflexive Banach spaces, and let
  $b_1:P_1\times U \to \mathbb{R}$ and $b_2:P_2\times U \to \mathbb{R}$
  be bilinear and continuous. Let
  \begin{equation}
    Z_{b_i} = \cbr{v \in U\ :\ b_i(p_i, v) = 0\ \forall p_i \in P_i} \subset U, \quad i=1,2,
  \end{equation}
  then the following are equivalent:
  \begin{enumerate}[label=\roman*.]
  \item There exists $c > 0$ such that
    \begin{equation*}
      \sup_{v \in U} \frac{b_1(p_1, v) + b_2(p_2, v)}{\norm{v}_U}
      \ge c\del[1]{\norm{p_1}_{P_1} + \norm{p_2}_{P_2}}
      \quad (p_1, p_2)\in P_1\times P_2.
    \end{equation*}
  \item There exists $c > 0$ such that
    \begin{equation*}
      \sup_{v \in U} \frac{b_1(p_1, v)}{\norm{v}_U}
      \ge c\norm{p_1}_{P_1},\ p_1\in P_1
      \ \text{and}\
      \sup_{v \in Z_{b_1}} \frac{b_2(p_2, v)}{\norm{v}_U}
      \ge c\norm{p_2}_{P_2},\ p_2\in P_2.
    \end{equation*}
  \end{enumerate}
\end{theorem}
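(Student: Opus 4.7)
The plan is to recast both conditions in operator-theoretic form and then appeal to the closed range theorem. Introduce bounded operators $B_i : U \to P_i^*$ for $i = 1, 2$ via $(B_i v)(p_i) := b_i(p_i, v)$, together with the combined operator $B := (B_1, B_2) : U \to P_1^* \times P_2^*$. Equipping the product $P_1 \times P_2$ with the norm $\norm{(p_1, p_2)} := \norm{p_1}_{P_1} + \norm{p_2}_{P_2}$, one checks that condition (i) is exactly the statement that the adjoint $B^* : P_1 \times P_2 \to U^*$ is bounded below. Since $U$, $P_1$, $P_2$ are reflexive, the closed range theorem converts this into surjectivity of $B$ onto $P_1^* \times P_2^*$ with a bounded right inverse. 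Similarly, the two parts of (ii) translate into surjectivity of $B_1 : U \to P_1^*$ and of the restriction $B_2|_{Z_{b_1}} : Z_{b_1} \to P_2^*$, where $Z_{b_1} = \ker B_1$ is a reflexive closed subspace of $U$.

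For the direction (i) $\Rightarrow$ (ii), the first inequality is obtained by setting $p_2 = 0$ in (i). For the second, I would apply surjectivity of $B$ to the pair $(0, \phi_2) \in P_1^* \times P_2^*$: this produces $v \in U$ with $B_1 v = 0$ and $B_2 v = \phi_2$ satisfying $\norm{v}_U \le C \norm{\phi_2}_{P_2^*}$. The vanishing of $B_1 v$ places $v$ in $Z_{b_1}$, and the accompanying norm bound is precisely the inf-sup of $b_2$ on $Z_{b_1}$.

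For the reverse direction (ii) $\Rightarrow$ (i), I would construct surjectivity of $B$ by a two-step correction. Given $(\phi_1, \phi_2) \in P_1^* \times P_2^*$, surjectivity of $B_1$ yields $v_1 \in U$ with $B_1 v_1 = \phi_1$ and $\norm{v_1}_U \le C_1 \norm{\phi_1}_{P_1^*}$. The residual $\phi_2 - B_2 v_1 \in P_2^*$ is controlled by continuity of $b_2$, and surjectivity of $B_2|_{Z_{b_1}}$ produces $v_2 \in Z_{b_1}$ with $B_2 v_2 = \phi_2 - B_2 v_1$ and $\norm{v_2}_U$ dominated by $\norm{\phi_1}_{P_1^*} + \norm{\phi_2}_{P_2^*}$. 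Setting $v := v_1 + v_2$ then gives $B_1 v = \phi_1$ (since $v_2 \in \ker B_1$) and $B_2 v = \phi_2$, with a linear bound on $\norm{v}_U$, which is the surjectivity of $B$ with bounded right inverse, equivalently condition (i).

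The only substantive ingredient is the closed range theorem, used in both directions to pass between bounded-below adjoints and surjective primal operators; this is where reflexivity of $U$, $P_1$, and $P_2$ enters in an essential way. The remainder is routine bookkeeping once the decomposition $v = v_1 + v_2$ along $Z_{b_1}$ and its complement is in place, and I do not anticipate a genuine obstacle.
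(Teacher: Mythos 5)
The paper does not prove this statement itself: it is quoted as a reduced version of Theorem~3.1 of Howell and Walkington \cite{Howell:2011}, so there is no in-paper proof to compare against. Your argument is correct and is essentially the standard proof of that result: recast the inf-sup conditions as bounded-below adjoints, use the closed range theorem (with reflexivity of $U$, $P_1$, $P_2$, and of the closed subspace $Z_{b_1}$) to pass to surjectivity of $B_1$, $B_2|_{Z_{b_1}}$ and of the combined operator $B$, and handle the direction (ii)~$\Rightarrow$~(i) by the two-step correction $v = v_1 + v_2$ with $v_2 \in Z_{b_1} = \ker B_1$, where continuity of $b_2$ controls the residual. The only points worth making explicit in a write-up are the quantitative form of the closed range theorem (a lower bound $c$ on the adjoint yields a right inverse with norm at most $1/c$, and conversely via norming functionals) and the fact that the choice of product norm on $P_1^* \times P_2^*$ only affects constants; neither is a gap.
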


\subsection{The HDG formulation of the Stokes problem}

We consider the HDG method of~\cite{Labeur:2012, Rhebergen:2018a,
Rhebergen:2017} for the Stokes problem \cref{eq:stokes}, which reads:
find $(\boldsymbol{u}_h, \boldsymbol{p}_h) \in \boldsymbol{X}_h$ such
that
\begin{equation}
  \label{eq:discrete_problem}
  a_h(\boldsymbol{u}_h, \boldsymbol{v}_h)
  + b_h(\boldsymbol{p}_h, v_h)
  + b_h(\boldsymbol{q}_h, u_h)
  = \del{v_h, f}_{\mathcal{T}}
  \quad \forall (\boldsymbol{v}_h, \boldsymbol{q}_h) \in \boldsymbol{X}_h,
\end{equation}
where
\begin{subequations}
  \begin{align}
    \label{eq:formA}
    a_h(\boldsymbol{w}_h, \boldsymbol{v}_h)
    :=&
    \del{\nabla w_h, \nabla v_h}_{\mathcal{T}}
    + \langle \alpha h^{-1}(w_h - \bar{w}_h), v_h - \bar{v}_h \rangle_{\partial\mathcal{T}}
    \\
    \nonumber
    & - \left\langle w_h - \bar{w}_h, \partial_n v_h \right\rangle_{\partial\mathcal{T}}
      - \left\langle \partial_n w_h, v_h - \bar{v}_h \right\rangle_{\partial\mathcal{T}},
    \\
    \label{eq:formB}
    b_h(\boldsymbol{q}_h, v_h)
    :=&
    - \del{q_h, \nabla \cdot v_h}_{\mathcal{T}}
    + \left\langle v_h \cdot n, \bar{q}_h \right\rangle_{\partial \mathcal{T}},
  \end{align}
  \label{eq:bilin_forms}
\end{subequations}
and where $n$ is the outward unit normal vector on $\partial K$.

The following properties of the bilinear forms will be useful when
constructing a preconditioner for the statically condensed version
of~\cref{eq:discrete_problem}. For sufficiently large $\alpha$,
$a_h(\cdot, \cdot)$ is coercive on $\boldsymbol{V}_h$ and bounded, i.e.,
there exist constants $c_a^s > 0$ and $c_a^b > 0$, independent of $h$,
such that for all $\boldsymbol{u}_h, \boldsymbol{v}_h \in
\boldsymbol{V}_h$,
\begin{equation}
  \label{eq:stab_bound_ah}
  a_h(\boldsymbol{v}_h, \boldsymbol{v}_h) \ge c_a^s\tnorm{\boldsymbol{v}_h}_v^2 \qquad\mbox{and}\qquad
  \envert{a_h(\boldsymbol{u}_h, \boldsymbol{v}_h)} \le c_a^b\tnorm{\boldsymbol{u}_h}_v\tnorm{\boldsymbol{v}_h}_v,
\end{equation}
see \cite[Lemmas~4.2 and~4.3]{Rhebergen:2017}. Furthermore, there
exist constants $c_b^b > 0$ and $\beta_p > 0$, independent of $h$,
such that for all $\boldsymbol{v}_h \in \boldsymbol{V}_h$ and for all
$\boldsymbol{q}_h \in \boldsymbol{Q}_h$,
\begin{equation}
  \label{eq:bound_bh}
  \envert{b_h(\boldsymbol{q}_h, v_h)} \le c_b^b \tnorm{\boldsymbol{v}_h}_v\tnorm{\boldsymbol{q}_h}_p
  \qquad\mbox{and}\qquad
    \beta_p \tnorm{\boldsymbol{q}_h}_{p} \le \sup_{\boldsymbol{v}_h\in \boldsymbol{V}_h}
    \frac{ b_h(\boldsymbol{q}_h, v_h) }{\tnorm{ \boldsymbol{v}_h }_v},
\end{equation}
see~\cite[Lemma~4.8 and Eq.~102]{Rhebergen:2017} and~\cite[Lemma
1]{Rhebergen:2018b}, respectively. Next we note that the
velocity-pressure coupling term \cref{eq:formB} can be written as
\begin{equation}
  \label{eq:formBsplit}
  b_h(\boldsymbol{q}_h, v_h) := b_1(q_h, v_h) + b_2(\bar{q}_h, v_h),
\end{equation}
where
\begin{equation}
  \label{eq:bhbar}
  b_1(q_h,v_h) := - \del{q_h, \nabla \cdot v_h}_{\mathcal{T}}
  \quad \text{and} \quad
  b_2(\bar{q}_h, v_h) := \left\langle v_h \cdot n, \bar{q}_h \right\rangle_{\partial \mathcal{T}}.
\end{equation}
It follows immediately from \cref{eq:bound_bh} that
\begin{equation}
  \label{eq:b2bounded}
  \envert{b_2(\bar{q}_h, v_h)} \le c_b^b \tnorm{\boldsymbol{v}_h}_v\norm{\bar{q}_h}_p .
\end{equation}
Furthermore, in \cite[Lemma 3]{Rhebergen:2018b} we proved that there
exists a constant $\bar{\beta} > 0$, independent of $h$, such that for
all $\bar{q}_h\in\bar{Q}_h$
\begin{equation}
  \label{eq:stab_bhbar}
  \bar{\beta} \norm{\bar{q}_h}_{p} \le \sup_{\boldsymbol{v}_h\in \boldsymbol{V}_h}
  \frac{ b_2(\bar{q}_h, v_h) }{\tnorm{ \boldsymbol{v}_h }_v}.
\end{equation}
Stability of $b_2$ holds also for velocities that are divergence-free on
each element $K \in \mathcal{T}$, i.e.
\begin{equation}
  \label{eq:stab_bhbar_div0}
  \bar{\beta} \norm{\bar{q}_h}_{p} \le \sup_{\boldsymbol{v}_h\in \boldsymbol{V}_h^0}
  \frac{ b_2(\bar{q}_h, v_h) }{\tnorm{ \boldsymbol{v}_h }_v}
\end{equation}
where $\boldsymbol{V}_h^0 := V_h^0 \times \bar{V}_h$ and
\begin{equation}
  \label{eq:defVh0}
  \begin{split}
    V_h^0
    :&= \cbr{ v_h \in V_h\ :\ b_1(v_h, q_h) = 0\ \forall q_h \in Q_h}
    \\
    &= \cbr{ v_h \in V_h\ :\ (\nabla\cdot v_h)|_K = 0\ \forall K \in \mathcal{T}}.
  \end{split}
\end{equation}
\Cref{eq:stab_bhbar_div0} is a direct consequence of the inf-sup
condition in \cref{eq:bound_bh} and \cref{thm:howell}. This result will
play an important role in constructing a preconditioner in
\cref{sec:preconditioning}.

\subsection{The matrix formulation}
\label{ss:matrixformulation}

To express \cref{eq:discrete_problem} as a linear algebra problem, we
let $u \in \mathbb{R}^{n_u}$ and $\bar{u} \in \mathbb{R}^{\bar{n}_u}$ be
vectors of the discrete element and trace velocities
(degrees-of-freedom) with respect to the basis for $V_h$ and
$\bar{V}_h$, respectively.  Similarly, we let $p \in \mathbb{R}^{n_q}$
and $\bar{p} \in \mathbb{R}^{\bar{n}_q}$ be the degrees-of-freedom
associated with the basis for $Q_h$ and $\bar{Q}_h$, respectively. We
define also $\mathbb{V} := \cbr[0]{ {\bf v} = [v^T\, \bar{v}^T]^T\, : v
\in \mathbb{R}^{n_u},\ \bar{v} \in \mathbb{R}^{\bar{n}_u}}$ and
$\mathbb{Q} := \cbr[0]{ {\bf q} = [q^T\, \bar{q}^T]^T\, : q \in
\mathbb{R}^{n_q},\ \bar{q} \in \mathbb{R}^{\bar{n}_q}}$ and denote by
${\bf u} = [u^T \ \bar{u}^T]^T \in \mathbb{V}$ and ${\bf p} = [p^T \
\bar{p}^T]^T \in \mathbb{Q}$.

Next, let $A \in \mathbb{R}^{(n_u + \bar{n}_u) \times (n_u +
\bar{n}_u)}$ be the symmetric matrix defined by
\begin{equation}
  \label{eq:matrix_A}
  a_h(\boldsymbol{v}_h, \boldsymbol{v}_h) = {\bf v}^T A {\bf v}
  \qquad \text{where} \qquad
  A :=
  \begin{bmatrix}
    A_{uu} & A_{\bar{u}u}^T \\
    A_{\bar{u}u} & A_{\bar{u}\bar{u}}
  \end{bmatrix},
\end{equation}
for any ${\bf v} \in \mathbb{V}$. Here $A_{uu}$, $A_{\bar{u}u}$ and
$A_{\bar{u} \bar{u}}$ are the matrices obtained from the
discretization of $a_h((\cdot, 0), (\cdot, 0))$,
$a_h((\cdot, 0), (0, \cdot))$ and $a_h((0, \cdot), (0, \cdot))$,
respectively. Similarly, let
$B_1 \in \mathbb{R}^{n_q \times (n_u + \bar{n}_u)}$ and
$B_2 \in \mathbb{R}^{\bar{n}_q \times (n_u + \bar{n}_u)}$ be the
matrices defined by
\begin{align}
  b_1(q_h, v_h) &= q^TB_1{\bf v} \qquad \text{where} &
  B_1 &:=
  \begin{bmatrix}
    B_{pu} & 0
  \end{bmatrix},
  \\
  \label{eq:def_B2}
  b_2(\bar{q}_h, v_h) &= \bar{q}^TB_2{\bf v} \qquad \text{where} &
  B_2 &:=
  \begin{bmatrix}
    B_{\bar{p}u} & 0
  \end{bmatrix},
\end{align}
for any $q \in \mathbb{R}^{n_q}$ and $\bar{q} \in
\mathbb{R}^{\bar{n}_q}$. Here $B_{pu}$ and $B_{\bar{p}u}$ are the
matrices obtained from the discretization of $b_h((\cdot,0), \cdot)$ and
$b_h((0, \cdot), \cdot)$, respectively. Finally we define $L_u$ such
that
\begin{equation}
  (v_h, f)_{\mathcal{T}} = {\bf v}^T L
  \qquad \text{where} \qquad
  L :=
  \begin{bmatrix}
    L_u \\ 0
  \end{bmatrix}.
\end{equation}
We can now express \cref{eq:discrete_problem} in block matrix form as
\begin{equation}
  \label{eq:fullmatrix}
  \begin{bmatrix}
    A_{uu} & A_{\bar{u}u}^T & B_{pu}^T & B_{\bar{p}u}^T \\
    A_{\bar{u}u} & A_{\bar{u}\bar{u}} & 0 & 0 \\
    B_{pu} & 0 & 0 & 0 \\
    B_{\bar{p}u} & 0 & 0 & 0
  \end{bmatrix}
  \begin{bmatrix}
    u \\ \bar{u} \\ p \\ \bar{p}
  \end{bmatrix}
  =
  \begin{bmatrix}
    L_u \\
    0 \\ 0 \\ 0
  \end{bmatrix}.
\end{equation}

The matrices $A_{uu}$ and $B_{pu}$ in \cref{eq:fullmatrix} are block
diagonal (one block per cell), therefore $u$ and $p$ and can be
eliminated locally via static condensation. This leads to the
\emph{two-field reduced system}:
\begin{equation}
  \label{eq:condensed2form_up}
  \underbrace{
  \begin{bmatrix}
    \bar{A}^d &
    -A_{\bar{u}u}A_{uu}^{-1}\mathcal{P}^TB_{\bar{p}u}^T \\
    -B_{\bar{p}u}\mathcal{P}A_{uu}^{-1}A_{\bar{u}u}^T &
    -B_{\bar{p}u}\mathcal{P}A_{uu}^{-1}\mathcal{P}^TB_{\bar{p}u}^T
  \end{bmatrix}}_{\bar{\mathbb{A}}}
  \begin{bmatrix}
    \bar{u} \\ \bar{p}
  \end{bmatrix} =
  \begin{bmatrix}
     - A_{\bar{u}u}\mathcal{P}A_{uu}^{-1}L_u \\
    - B_{\bar{p}u}\mathcal{P}A_{uu}^{-1}L_u
  \end{bmatrix},
\end{equation}
with
\begin{equation}
  \label{eq:defAbard_up}
  \bar{A}^d := -A_{\bar{u}u}\mathcal{P}A_{uu}^{-1}A_{\bar{u}u}^T + A_{\bar{u}\bar{u}},
\end{equation}
and
\begin{equation}
  \label{eq:projection}
  \mathcal{P} := I - \Pi,
  \quad
  \Pi := A_{uu}^{-1}B_{pu}^T(B_{pu}A_{uu}^{-1}B_{pu}^T)^{-1}B_{pu}.
\end{equation}
Note that $\bar{A}^d$ is symmetric, and we remark that $\mathcal{P}$ is
an oblique projection matrix into the null-space of $B_{pu}$. It is
important to note that $\mathcal{P}$ can be assembled element-wise and
is therefore a local operator. Then, given the trace velocity $\bar{u}$
and the trace pressure $\bar{p}$, the element velocity $u$ and pressure
$p$ can be computed element-wise in a post-processing step by solving:
\begin{equation}
  \label{eq:localdiscreteProb_up}
  \begin{bmatrix}
    A_{uu} & B_{pu}^T \\
    B_{pu} & 0
  \end{bmatrix}
  \begin{bmatrix}
    u \\ p
  \end{bmatrix}
  =
  \begin{bmatrix}
    L_u - A_{\bar{u}u}^T\bar{u} - B_{\bar{p}u}^T\bar{p}
    \\
    0
  \end{bmatrix},
\end{equation}
resulting in
\begin{equation}
  \label{eq:localdiscreteSol_up}
  \begin{bmatrix}
    u \\ p
  \end{bmatrix}
  =
  \begin{bmatrix}
    \mathcal{P}A_{uu}^{-1}(L_u - A_{\bar{u}u}^T\bar{u} - B_{\bar{p}u}^T\bar{p}) \\
    (B_{pu}A_{uu}^{-1}B_{pu}^T)^{-1}B_{pu}A_{uu}^{-1}L_u
  \end{bmatrix}.
\end{equation}
It is clear that once $\bar{u}$ and $\bar{p}$ are known, $u =
\mathcal{P}A_{uu}^{-1}(L_u - A_{\bar{u}u}^T\bar{u} -
B_{\bar{p}u}^T\bar{p}) \in \Ker\, B_{pu}$ exactly (up to machine
precision) due to the application of $\mathcal{P}$ to the vector
$A_{uu}^{-1}(L_u - A_{\bar{u}u}^T\bar{u} - B_{\bar{p}u}^T\bar{p})$. In
other words, the discrete velocity $u_h$ is exactly divergence free on
each element.

\subsection{Matrix properties}

The following properties of the different matrices will be useful when
constructing a preconditioner for~\cref{eq:condensed2form_up} in
\cref{sec:preconditioning}.

\begin{lemma}
  \label{lem:mat2fieldInv}
  The matrix of the two-field reduced system \cref{eq:condensed2form_up}
  is invertible.
\end{lemma}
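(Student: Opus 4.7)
My plan is to show that the reduced matrix $\bar{\mathbb{A}}$ is the Schur complement obtained from the full saddle-point matrix in \cref{eq:fullmatrix} by eliminating the element velocity and element pressure degrees of freedom, so that its invertibility is equivalent to that of the full matrix, which in turn follows from the well-posedness of the HDG discretization \cref{eq:discrete_problem}.

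First I would verify that the local eliminations producing \cref{eq:condensed2form_up} are legitimate, i.e., that $A_{uu}$ and $B_{pu}A_{uu}^{-1}B_{pu}^T$ are invertible. For $A_{uu}$, testing \cref{eq:stab_bound_ah} on $\boldsymbol{v}_h = (v_h, 0)$ gives $v^T A_{uu} v \ge c_a^s \tnorm{(v_h, 0)}_v^2$; the boundary-penalty term in $\tnorm{\cdot}_v$ then controls $\|v_h\|_{\partial K}$, and on discontinuous polynomials this is enough to rule out a kernel, so $A_{uu}$ is symmetric positive definite. Invertibility of $B_{pu}A_{uu}^{-1}B_{pu}^T$ amounts to $B_{pu}$ having full row rank, which follows from the standard local (element-wise) inf-sup between $[P_k(K)]^d$ and $P_{k-1}(K)$. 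With these two local invertibilities in place, the passage from \cref{eq:fullmatrix} to \cref{eq:condensed2form_up} is a block-$LDL^T$ factorization (a congruence), so $\bar{\mathbb{A}}$ inherits the rank properties of the full matrix.

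It then remains to argue that the full matrix in \cref{eq:fullmatrix} is invertible, which I would do by the standard energy-plus-inf-sup argument. Given a homogeneous solution $(u, \bar{u}, p, \bar{p})$, testing against $(\boldsymbol{u}_h, \boldsymbol{p}_h)$ itself cancels the $b_h$ contributions and yields $a_h(\boldsymbol{u}_h, \boldsymbol{u}_h) = 0$, whence $\boldsymbol{u}_h = 0$ by the coercivity in \cref{eq:stab_bound_ah}. The first block row then reduces to $B_{pu}^T p + B_{\bar{p}u}^T \bar{p} = 0$, and the inf-sup condition \cref{eq:bound_bh} forces $\boldsymbol{p}_h = 0$.

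The main obstacle is subtle rather than computational: the bilinear form $b_h(\boldsymbol{q}_h, v_h)$ vanishes whenever $(q_h, \bar{q}_h)$ is a common constant, so strictly speaking \cref{eq:fullmatrix}, and consequently $\bar{\mathbb{A}}$, carry a one-dimensional null space corresponding to the familiar non-uniqueness of the Stokes pressure. The invertibility claim is therefore to be understood either modulo this constant pressure mode or after imposing the zero-mean constraint introduced alongside \cref{eq:stokes}; under either convention the preceding sketch goes through, and none of the subsequent preconditioning analysis in \cref{sec:preconditioning} is affected.
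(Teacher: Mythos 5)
Your proposal is correct and follows essentially the same route as the paper: both identify the matrix of \cref{eq:condensed2form_up} as the Schur complement of the reordered full system \cref{eq:fullmatrix} with respect to the invertible local $(u,p)$ block, and then transfer invertibility from the full matrix; the only difference is that you re-derive nonsingularity of the full matrix via coercivity \cref{eq:stab_bound_ah} plus the inf-sup condition \cref{eq:bound_bh}, whereas the paper simply cites well-posedness of \cref{eq:discrete_problem}. Your closing caveat about the constant pressure mode is consistent with the paper's conventions: the zero-mean constraint stated after \cref{eq:stokes} is implicitly built into the pressure space (and is in fact necessary for \cref{eq:bound_bh} to hold with $\beta_p>0$), so no genuine discrepancy arises.
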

\begin{proof}
  Write \cref{eq:fullmatrix} as
  \begin{equation}
    \label{eq:pf_fm}
    \begin{bmatrix}
      A_{uu} & B_{pu}^T & A_{\bar{u}u}^T & B_{\bar{p}u}^T \\
      B_{pu} & 0 & 0 & 0 \\
      A_{\bar{u}u} & 0 & A_{\bar{u}\bar{u}} & 0 \\
      B_{\bar{p}u} & 0 & 0 & 0
    \end{bmatrix}
    \begin{bmatrix}
      u \\ p \\ \bar{u} \\ \bar{p}
    \end{bmatrix}
    =
    \begin{bmatrix}
      L_u \\ 0 \\ 0 \\ 0
    \end{bmatrix}.
  \end{equation}
  Define
  \begin{equation}
    \mathcal{A} =
    \begin{bmatrix}
      A_{uu} & B_{pu}^T \\
      B_{pu}& 0
    \end{bmatrix},
    \quad
    \mathcal{B} =
    \begin{bmatrix}
      A_{\bar{u}u} & 0 \\
      B_{\bar{p}u} & 0
    \end{bmatrix},
    \quad
    \mathcal{C} =
    \begin{bmatrix}
      A_{\bar{u}\bar{u}} & 0 \\
      0 & 0
    \end{bmatrix}.
  \end{equation}
  We can then write the matrix in \cref{eq:pf_fm} as the following block
  triangular factorization:
  \begin{equation}
    \mathbb{A}
    =
    \begin{bmatrix}
      \mathcal{A} & \mathcal{B}^T \\
      \mathcal{B} & \mathcal{C}
    \end{bmatrix}
    =
    \begin{bmatrix}
      I & 0 \\
      \mathcal{B}\mathcal{A}^{-1} & I
    \end{bmatrix}
    \begin{bmatrix}
      \mathcal{A} & 0 \\
      0 & \mathcal{S}
    \end{bmatrix}
    \begin{bmatrix}
      I & \mathcal{A}^{-1}\mathcal{B}^T \\
      0 & I
    \end{bmatrix},
  \end{equation}
  where the Schur complement $\mathcal{S} = \mathcal{C} -
  \mathcal{B}\mathcal{A}^{-1}\mathcal{B}^T$ is exactly the matrix of the
  two-field reduced system \cref{eq:condensed2form_up}. Since
  $\mathcal{A}$ is invertible ($A_{uu}$ is positive definite and
  $B_{pu}$ is full rank since it satisfies the inf-sup condition), we
  have that $\mathbb{A}$ is nonsingular if and only if $\mathcal{S}$ is
  nonsingular. Since $\mathbb{A}$ is nonsingular (due to well-posedness
  of \cref{eq:discrete_problem} \cite{Rhebergen:2017}), the result
  follows.
\end{proof}

Next, let us recall the following result from \cite[Chapter~A.5.5]{Boyd:book}.
\begin{proposition}
  \label{prop:pdpsd}
  Consider the following symmetric matrix:
  \begin{equation*}
    M =
    \begin{bmatrix}
      A & B^T \\ B & C
    \end{bmatrix}.
  \end{equation*}
  $M$ is positive-definite $\Leftrightarrow$ $A$ and $C-BA^{-1}B^T$ are
  positive-definite.
\end{proposition}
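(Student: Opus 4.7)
The plan is to prove both directions via the block congruence (LDL$^T$) factorization that already appears in the proof of \cref{lem:mat2fieldInv}, namely
\begin{equation*}
M = \begin{bmatrix} I & 0 \\ BA^{-1} & I \end{bmatrix}
\begin{bmatrix} A & 0 \\ 0 & C - BA^{-1}B^T \end{bmatrix}
\begin{bmatrix} I & A^{-1}B^T \\ 0 & I \end{bmatrix},
\end{equation*}
combined with the elementary fact that a congruence transformation $M \mapsto L^T M L$ by an invertible matrix $L$ preserves positive-definiteness (Sylvester's law of inertia). The outer factors above are unit triangular, hence invertible, so $M$ is positive-definite if and only if the middle block-diagonal matrix is positive-definite; the latter holds if and only if both diagonal blocks $A$ and $C - BA^{-1}B^T$ are positive-definite. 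This gives the equivalence at once, \emph{provided we already know $A$ is invertible so that the factorization makes sense}.

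For the forward direction ($\Rightarrow$), I would first establish that $A$ is positive-definite by testing $M$ on block vectors of the form $(x,0)^T$: for any nonzero $x$,
\begin{equation*}
x^T A x = [x^T, 0]\, M\, [x^T, 0]^T > 0,
\end{equation*}
so $A$ is positive-definite and therefore invertible. The factorization above is then well defined, and since $M$ is positive-definite the middle block-diagonal matrix is positive-definite by congruence, so in particular its $(2,2)$-block $C - BA^{-1}B^T$ is positive-definite.

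For the reverse direction ($\Leftarrow$), positive-definiteness of $A$ gives invertibility, so again the factorization is well defined. The block-diagonal matrix with diagonal blocks $A$ and $C - BA^{-1}B^T$ is then positive-definite by hypothesis, and conjugating by the invertible unit triangular factor recovers $M$, so $M$ is positive-definite.

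The main (and essentially only) subtle point is ensuring that $A^{-1}$ exists before invoking the Schur complement; this is automatic in the reverse direction by assumption, and in the forward direction is handled by the preliminary restriction argument to vectors of the form $(x,0)^T$. Everything else is bookkeeping via congruence, and since the result is a standard textbook fact cited from \cite{Boyd:book}, the statement can reasonably be presented either with this short proof or by reference.
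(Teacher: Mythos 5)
Your proof is correct: the block congruence $M = L^T \operatorname{bdiag}(A,\, C-BA^{-1}B^T)\, L$ with unit-triangular $L$, together with Sylvester's law of inertia and the preliminary restriction to vectors $(x,0)^T$ to secure invertibility of $A$ in the forward direction, is the standard argument and has no gaps. Note that the paper itself offers no proof of this proposition --- it is simply recalled from \cite[Chapter~A.5.5]{Boyd:book} --- so there is nothing to compare against beyond observing that the factorization you use is the same one the authors deploy elsewhere (in the proof of \cref{lem:mat2fieldInv}), making your write-up a natural and consistent way to make the citation self-contained.
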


This proposition is used now to prove the following lemma.

\begin{lemma}
  \label{lem:AbarSPD}
  The matrix
  $\bar{A} = A_{\bar{u}\bar{u}} -
  A_{\bar{u}u}A_{uu}^{-1}A_{\bar{u}u}^T$ is symmetric-positive
  definite.
\end{lemma}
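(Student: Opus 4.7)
The plan is to obtain $\bar{A}$ as the Schur complement of the full velocity block $A$ in \cref{eq:matrix_A} and then apply \cref{prop:pdpsd} in the direction ``$M$ positive definite $\Rightarrow$ Schur complement positive definite.'' Symmetry of $\bar{A}$ is immediate from the symmetry of $A_{\bar{u}\bar{u}}$ and of $A_{uu}^{-1}$, together with the fact that $A_{\bar{u}u}$ and $A_{\bar{u}u}^T$ appear symmetrically in the definition. The real content is the positive-definiteness.

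First I would show that the full matrix $A$ is symmetric positive definite. Symmetry is clear from \cref{eq:matrix_A}. For definiteness, I would use the coercivity estimate \cref{eq:stab_bound_ah}: for any $\mathbf{v} \in \mathbb{V}$ with corresponding $\boldsymbol{v}_h = (v_h,\bar v_h) \in \boldsymbol{V}_h$, one has
\begin{equation*}
\mathbf{v}^T A \mathbf{v} = a_h(\boldsymbol{v}_h,\boldsymbol{v}_h) \ge c_a^s \tnorm{\boldsymbol{v}_h}_v^2 \ge 0.
\end{equation*}
The key small step is to verify that $\tnorm{\cdot}_v$ in \cref{eq:stability_norm} is actually a norm, and not merely a seminorm, on $\boldsymbol{V}_h$: if $\tnorm{\boldsymbol{v}_h}_v = 0$, then $\nabla v_h = 0$ on each $K$ and $v_h = \bar v_h$ on each $\partial K$, so $v_h$ is piecewise constant and single-valued across facets; combined with the boundary condition $\bar v_h = 0$ on $\partial\Omega$ built into $\bar V_h$, this forces $v_h \equiv 0$ and $\bar v_h \equiv 0$, hence $\mathbf{v} = 0$. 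Therefore $\mathbf{v}^T A \mathbf{v} > 0$ for every nonzero $\mathbf{v}$.

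Second, having established that $A$ is symmetric positive definite, I would invoke \cref{prop:pdpsd} with the identifications ``$A$'' $= A_{uu}$, ``$B$'' $= A_{\bar u u}$, ``$C$'' $= A_{\bar u \bar u}$. The $\Rightarrow$ direction of the proposition then states that both $A_{uu}$ and the Schur complement
\begin{equation*}
A_{\bar u \bar u} - A_{\bar u u} A_{uu}^{-1} A_{\bar u u}^T = \bar{A}
\end{equation*}
are positive definite. Combined with the symmetry argument above, this gives that $\bar A$ is symmetric positive definite.

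The only mildly subtle point in this proof is the verification that $\tnorm{\cdot}_v$ separates points in $\boldsymbol{V}_h$, since without that one only gets positive semidefiniteness of $A$ from coercivity. Everything else is a direct application of a standard Schur complement fact, and no new estimates beyond those already collected in \cref{eq:stab_bound_ah} are required.
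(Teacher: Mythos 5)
Your proof is correct and follows essentially the same route as the paper: symmetry is immediate, $A$ is symmetric positive-definite by the coercivity estimate \cref{eq:stab_bound_ah}, and \cref{prop:pdpsd} then yields positive-definiteness of the Schur complement $\bar{A}$. The only difference is that you explicitly verify that $\tnorm{\cdot}_v$ is a genuine norm on $\boldsymbol{V}_h$ (so coercivity gives strict definiteness), a detail the paper leaves implicit.
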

\begin{proof}
  It is clear that $\bar{A}$ is symmetric. By \cref{prop:pdpsd} we
  know that $A$ in \cref{eq:matrix_A} is positive definite if and only
  if $A_{uu}$ and $\bar{A}$ are positive-definite. We know that $A$ is
  symmetric positive-definite by \cref{eq:stab_bound_ah} so that
  $A_{uu}$ and $\bar{A}$ are symmetric positive-definite.
\end{proof}

\begin{lemma}
  \label{lem:barAdspdmat}
  The matrix $\bar{A}^d$ in \cref{eq:defAbard_up} is symmetric
  positive-definite.
\end{lemma}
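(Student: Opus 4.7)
The plan is to decompose $\mathcal{P}A_{uu}^{-1}$ into a symmetric positive part of $A_{uu}^{-1}$ minus a symmetric positive semidefinite correction, which will let me compare $\bar{A}^d$ directly to the SPD matrix $\bar{A}$ from \cref{lem:AbarSPD}.

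First I would observe that, despite $\mathcal{P}$ being only an oblique projection, the combination $\mathcal{P}A_{uu}^{-1}$ is in fact symmetric. Expanding,
\begin{equation*}
  \mathcal{P}A_{uu}^{-1}
  = A_{uu}^{-1} - A_{uu}^{-1}B_{pu}^T\del[1]{B_{pu}A_{uu}^{-1}B_{pu}^T}^{-1}B_{pu}A_{uu}^{-1}
  =: A_{uu}^{-1} - G,
\end{equation*}
and $G$ is manifestly symmetric because $A_{uu}$ is. Moreover $G$ is positive semidefinite: for any $x$, setting $y = B_{pu}A_{uu}^{-1}x$ gives $x^T G x = y^T (B_{pu}A_{uu}^{-1}B_{pu}^T)^{-1} y \ge 0$, using that $B_{pu}A_{uu}^{-1}B_{pu}^T$ is SPD (as $A_{uu}$ is SPD by \cref{eq:stab_bound_ah} and $B_{pu}$ has full row rank by the inf-sup condition, as noted in the proof of \cref{lem:mat2fieldInv}).

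Next I would substitute this decomposition into the definition \cref{eq:defAbard_up}:
\begin{equation*}
  \bar{A}^d
  = A_{\bar{u}\bar{u}} - A_{\bar{u}u}\mathcal{P}A_{uu}^{-1}A_{\bar{u}u}^T
  = \underbrace{A_{\bar{u}\bar{u}} - A_{\bar{u}u}A_{uu}^{-1}A_{\bar{u}u}^T}_{=\bar{A}}
    + A_{\bar{u}u} G A_{\bar{u}u}^T.
\end{equation*}
Symmetry of $\bar{A}^d$ follows from symmetry of $A_{\bar{u}\bar{u}}$ and of $\mathcal{P}A_{uu}^{-1}$. The first summand is SPD by \cref{lem:AbarSPD}, and the second summand is PSD because $G$ is PSD. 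The sum of an SPD matrix and a PSD matrix is SPD, so $\bar{A}^d$ is SPD.

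There is no real obstacle here; the only point that requires a small observation is the symmetry of $\mathcal{P}A_{uu}^{-1}$, which is not obvious from $\mathcal{P}$ alone since $\mathcal{P}$ is an \emph{oblique} projector. Once this symmetry and the PSD structure of $G$ are recognized, the result is immediate from \cref{lem:AbarSPD}.
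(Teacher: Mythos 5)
Your proof is correct and follows essentially the same route as the paper: both decompose $\bar{A}^d = \bar{A} + A_{\bar{u}u}\Pi A_{uu}^{-1}A_{\bar{u}u}^T$ (your $G$ is exactly $\Pi A_{uu}^{-1}$), invoke \cref{lem:AbarSPD} for the SPD part, and verify positive semidefiniteness of the correction via the same quadratic form in the $S_{pp}^{-1}$-inner product. The only cosmetic difference is that you phrase the key observation as symmetry of $\mathcal{P}A_{uu}^{-1}$ rather than working with $\Pi$ directly.
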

\begin{proof}
  The symmetry of $\bar{A}^d$ is clear. Using the definition of
  $\mathcal{P}$ in \cref{eq:projection} we have:
  \begin{equation}
    \bar{A}^d = \bar{A} + A_{\bar{u}u}\Pi A_{uu}^{-1}A_{\bar{u}u}^T,
  \end{equation}
  where
  $\bar{A} = -A_{\bar{u}u}A_{uu}^{-1}A_{\bar{u}u}^T +
  A_{\bar{u}\bar{u}}$. By \cref{lem:AbarSPD} we know that
  $\bar{A} = -A_{\bar{u}u}A_{uu}^{-1}A_{\bar{u}u}^T +
  A_{\bar{u}\bar{u}}$ is symmetric positive-definite. Consider now
  $A_{\bar{u}u}\Pi A_{uu}^{-1}A_{\bar{u}u}^T$. Let
  $S_{pp} = B_{pu}A_{uu}^{-1}B_{pu}^T$ and note that $S_{pp}$ is
  symmetric and positive-definite (since $A_{uu}$ is symmetric
  positive-definite and $B_{pu}$ is full rank). Then
  \begin{equation}
    \begin{split}
      \langle A_{\bar{u}u}\Pi A_{uu}^{-1}A_{\bar{u}u}^T\bar{x}, \bar{x} \rangle
      &=
      \langle A_{\bar{u}u} A_{uu}^{-1}B_{pu}^TS_{pp}^{-1}B_{pu}A_{uu}^{-1}A_{\bar{u}u}^T\bar{x}, \bar{x} \rangle
      \\
      &=
      \langle S_{pp}^{-1}B_{pu}A_{uu}^{-1}A_{\bar{u}u}^T\bar{x}, B_{pu}A_{uu}^{-1}A_{\bar{u}u}^T\bar{x} \rangle
      \\
      &=
      \langle B_{pu}A_{uu}^{-1}A_{\bar{u}u}^T\bar{x}, B_{pu}A_{uu}^{-1}A_{\bar{u}u}^T\bar{x} \rangle_{S_{pp}^{-1}}
      \\
      &\ge 0,
    \end{split}
  \end{equation}
  hence $A_{\bar{u}u}\Pi A_{uu}^{-1}A_{\bar{u}u}^T$ is symmetric
  positive semidefinite and $\bar{A}^d$ must be positive-definite.
\end{proof}

\begin{lemma}
  \label{lem:schurcomp2fieldinvert}
  The Schur complement of the matrix of the two-field reduced system
  \cref{eq:condensed2form_up} is invertible.
\end{lemma}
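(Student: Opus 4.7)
The plan is to mirror the block triangular factorization argument already used in the proof of \cref{lem:mat2fieldInv}, but now applied one level down to $\bar{\mathbb{A}}$ itself. First I would observe that $\bar{\mathbb{A}}$ is symmetric: indeed, $\bar{\mathbb{A}}$ arises as the Schur complement $\mathcal{C} - \mathcal{B}\mathcal{A}^{-1}\mathcal{B}^{T}$ of the symmetric matrix $\mathbb{A}$ in \cref{eq:pf_fm}, hence is symmetric. In particular the off-diagonal block $-A_{\bar{u}u}\mathcal{P}A_{uu}^{-1}B_{\bar{p}u}^{T}$ is the transpose of $-B_{\bar{p}u}\mathcal{P}A_{uu}^{-1}A_{\bar{u}u}^{T}$, so setting $\bar{C} := -B_{\bar{p}u}\mathcal{P}A_{uu}^{-1}A_{\bar{u}u}^{T}$ and $\bar{D} := -B_{\bar{p}u}\mathcal{P}A_{uu}^{-1}B_{\bar{p}u}^{T}$ we may write
\begin{equation*}
  \bar{\mathbb{A}} =
  \begin{bmatrix}
    \bar{A}^{d} & \bar{C}^{T} \\
    \bar{C} & \bar{D}
  \end{bmatrix}.
\end{equation*}

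Next, since $\bar{A}^{d}$ is symmetric positive-definite by \cref{lem:barAdspdmat}, it is invertible, and we may carry out the block LDU factorization
\begin{equation*}
  \bar{\mathbb{A}}
  =
  \begin{bmatrix}
    I & 0 \\
    \bar{C}(\bar{A}^{d})^{-1} & I
  \end{bmatrix}
  \begin{bmatrix}
    \bar{A}^{d} & 0 \\
    0 & \bar{S}
  \end{bmatrix}
  \begin{bmatrix}
    I & (\bar{A}^{d})^{-1}\bar{C}^{T} \\
    0 & I
  \end{bmatrix},
\end{equation*}
where $\bar{S} := \bar{D} - \bar{C}(\bar{A}^{d})^{-1}\bar{C}^{T}$ is precisely the Schur complement of $\bar{\mathbb{A}}$ with respect to its leading block.

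Finally, the two outer triangular factors are unit triangular and therefore invertible. Hence $\bar{\mathbb{A}}$ is invertible if and only if both $\bar{A}^{d}$ and $\bar{S}$ are invertible. We already have invertibility of $\bar{\mathbb{A}}$ from \cref{lem:mat2fieldInv} and of $\bar{A}^{d}$ from \cref{lem:barAdspdmat}, so $\bar{S}$ must be invertible as well. No genuine obstacle is anticipated; the argument is purely linear algebraic and uses only the previously established facts that $\bar{\mathbb{A}}$ is nonsingular and $\bar{A}^{d}$ is SPD.
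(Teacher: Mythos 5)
Your proposal is correct and takes essentially the same route as the paper: the paper's proof likewise reuses the block triangular factorization from \cref{lem:mat2fieldInv} with $\mathcal{A}=\bar{A}^d$, invokes \cref{lem:barAdspdmat} for invertibility of the leading block, and concludes from the nonsingularity of $\bar{\mathbb{A}}$ that the Schur complement is nonsingular. Your write-up is just a more explicit spelling-out of the same argument.
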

\begin{proof}
  The proof is the same as that of \cref{lem:mat2fieldInv} but with
  \begin{equation}
    \mathcal{A} = \bar{A}^d, \quad
    \mathcal{B} = -B_{\bar{p}u}\mathcal{P}A_{uu}^{-1}A_{\bar{u}u}^T, \quad
    \mathcal{C} = -B_{\bar{p}u}\mathcal{P}A_{uu}^{-1}B_{\bar{p}u}^T.
  \end{equation}
  By \cref{lem:mat2fieldInv} we know that the matrix of the two-field
  reduced system \cref{eq:condensed2form_up} is invertible and by
  \cref{lem:barAdspdmat} we know that $\mathcal{A}$ is symmetric
  positive-definite. The result follows.
\end{proof}

\section{Preconditioning}
\label{sec:preconditioning}

We present now a provably optimal preconditioner for the condensed
problem in~\cref{eq:condensed2form_up}.

\subsection{Block preconditioner}
\label{ss:elim_vel_pres}

We first introduce some definitions. The (negative) trace pressure Schur
complement of the matrix in \cref{eq:condensed2form_up} is given by
\begin{equation}
  \label{eq:facetpres_schurcomp}
  \bar{S}
  := B_{\bar{p}u}\mathcal{P}\del[1]{ A_{uu}^{-1} + A_{uu}^{-1}A_{\bar{u}u}^T (\bar{A}^d)^{-1}A_{\bar{u}u}A_{uu}^{-1}}
        \mathcal{P}^TB_{\bar{p}u}^T.
\end{equation}
The element $M$ and trace $\bar{M}$ pressure mass matrices are defined
by, respectively,
\begin{equation}
  \label{eq:def_cell_facet_mmatrix}
  \norm{q_h}_{\Omega}^2 = q^TMq, \qquad
  \norm{\bar{q}_h}_p^2 = \bar{q}^T\bar{M}\bar{q}.
\end{equation}
We will also require the following matrix
\begin{equation}
  \label{eq:AP_matrix_def}
  A_{\mathcal{P}} =
  \begin{bmatrix}
    A_{uu} & \mathcal{P}^TA_{\bar{u}u}^T \\
    A_{\bar{u}u}\mathcal{P} & A_{\bar{u}\bar{u}}
  \end{bmatrix}.
\end{equation}
Next, let $\mathbb{V}^0 := \cbr[0]{ {\bf v} \in \mathbb{V} : v \in
\Ker\, B_{pu}}$. It is then easy to show, using that $\mathcal{P}v
= v$ for ${\bf v} \in \mathbb{V}^0$, that
\begin{equation}
  \label{eq:innerprodA_Ap}
  \langle A{\bf v}, {\bf v} \rangle = \langle A_{\mathcal{P}}{\bf v}, {\bf v} \rangle
  \quad \forall {\bf v} \in \mathbb{V}^0.
\end{equation}

\begin{lemma}
  The matrix $A_{\mathcal{P}}$ in \cref{eq:AP_matrix_def} is symmetric
  positive-definite.
\end{lemma}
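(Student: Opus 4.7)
The plan is to apply \cref{prop:pdpsd} to $A_{\mathcal{P}}$. First I would dispose of symmetry: the diagonal blocks $A_{uu}$ and $A_{\bar{u}\bar{u}}$ are symmetric since they come from the symmetric bilinear form $a_h$, and the off-diagonal blocks satisfy $(\mathcal{P}^T A_{\bar{u}u}^T)^T = A_{\bar{u}u}\mathcal{P}$, so $A_{\mathcal{P}}$ is symmetric. By \cref{prop:pdpsd} it then remains to verify that $A_{uu}$ (which is symmetric positive-definite by \cref{eq:stab_bound_ah}) and the Schur complement
\begin{equation*}
  S := A_{\bar{u}\bar{u}} - A_{\bar{u}u}\mathcal{P}A_{uu}^{-1}\mathcal{P}^T A_{\bar{u}u}^T
\end{equation*}
are positive-definite.

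The central identity I would establish is
\begin{equation*}
  \mathcal{P}A_{uu}^{-1}\mathcal{P}^T = \mathcal{P}A_{uu}^{-1}.
\end{equation*}
For this, note that $A_{uu}\Pi = B_{pu}^T(B_{pu}A_{uu}^{-1}B_{pu}^T)^{-1}B_{pu}$ is manifestly symmetric, so $\Pi^T A_{uu} = A_{uu}\Pi$, i.e.\ $A_{uu}^{-1}\Pi^T = \Pi A_{uu}^{-1}$. Combined with the idempotency $\Pi^2 = \Pi$, expanding $\mathcal{P}A_{uu}^{-1}\mathcal{P}^T = (I-\Pi)A_{uu}^{-1}(I-\Pi^T)$ collapses three of the four terms and yields the displayed equality.

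With this identity, $S = A_{\bar{u}\bar{u}} - A_{\bar{u}u}\mathcal{P}A_{uu}^{-1}A_{\bar{u}u}^T$, which is exactly $\bar{A}^d$ from \cref{eq:defAbard_up}. By \cref{lem:barAdspdmat}, $\bar{A}^d$ is symmetric positive-definite. Combining with the positive-definiteness of $A_{uu}$, \cref{prop:pdpsd} delivers the conclusion.

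The main obstacle is the algebraic identity $\mathcal{P}A_{uu}^{-1}\mathcal{P}^T = \mathcal{P}A_{uu}^{-1}$; once that is in hand the rest is a direct application of the block-matrix positivity criterion together with \cref{lem:barAdspdmat}. An alternative route would be to use \cref{eq:innerprodA_Ap} directly, but since that identity holds only on the subspace $\mathbb{V}^0$ it does not by itself yield positive-definiteness on all of $\mathbb{V}$, so the Schur-complement route is preferable.
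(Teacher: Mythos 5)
Your proof is correct and follows essentially the same route as the paper: apply \cref{prop:pdpsd} with $A_{uu}$ positive-definite and the Schur complement of $A_{\mathcal{P}}$ identified with $\bar{A}^d$, which is positive-definite by \cref{lem:barAdspdmat}. The only difference is that you explicitly verify the identity $\mathcal{P}A_{uu}^{-1}\mathcal{P}^T = \mathcal{P}A_{uu}^{-1}$ (via $\Pi^2 = \Pi$ and $\Pi A_{uu}^{-1} = A_{uu}^{-1}\Pi^T$) needed to see that the Schur complement really is $\bar{A}^d$, a step the paper states without proof.
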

\begin{proof}
  We first note that $A_{uu}$ is symmetric and positive-definite.
  Second, we note that the Schur complement of $A_{\mathcal{P}}$ is
  $\bar{A}^d$ which, by \cref{lem:barAdspdmat} is symmetric
  positive-definite. The result follows by \cref{prop:pdpsd}.
\end{proof}

In the following two lemmas we show that $\bar{S}$ is spectrally
equivalent to $\bar{M}$ and $B_{\bar{p}u}A_{uu}^{-1}B_{\bar{p}u}^T$.

\begin{lemma}
  \label{lem:spec_equiv_Mbar_Sbar}
  Let $\bar{S}$ be the matrix defined in \cref{eq:facetpres_schurcomp}
  and let $\bar{M}$ be the trace pressure mass matrix defined in
  \cref{eq:def_cell_facet_mmatrix}. Let $c_a^s$ and $c_a^b$ be the
  constants given in \cref{eq:stab_bound_ah} and let $c_b^b$ and
  $\bar{\beta}$ be the constants given in \cref{eq:b2bounded} and
  \cref{eq:stab_bhbar}, respectively. The following holds:
  \begin{equation}
    \label{eq:spectralequivalence}
    \frac{\bar{\beta}^2}{c_a^b}
    \le
    \frac{\bar{q}^T\bar{S}\bar{q}}{\bar{q}^T\bar{M}\bar{q}}
    \le
    \frac{(c_b^b)^2}{c_a^s}.
  \end{equation}
\end{lemma}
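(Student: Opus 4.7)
The plan is to reduce $\bar{q}^T\bar{S}\bar{q}$ to a supremum of $b_2$ over the discretely divergence-free subspace $\boldsymbol{V}_h^0$, so that the estimates \eqref{eq:stab_bound_ah}, \eqref{eq:b2bounded}, and \eqref{eq:stab_bhbar_div0} can be applied directly. Specifically, I will first establish the Rayleigh-quotient identity
\begin{equation*}
  \bar{q}^T\bar{S}\bar{q}
  = \sup_{\boldsymbol{v}_h\in\boldsymbol{V}_h^0}
    \frac{\del[1]{b_2(\bar{q}_h,v_h)}^2}{a_h(\boldsymbol{v}_h,\boldsymbol{v}_h)},
\end{equation*}
after which the two estimates in \eqref{eq:spectralequivalence} are immediate.

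The first step is to recognize $\bar{S}$ as the Schur complement of a symmetric saddle-point matrix built from $A_{\mathcal{P}}$ and the modified coupling $\tilde{B}_2 := [B_{\bar{p}u}\mathcal{P}\ 0]$. Using the commutation $A_{uu}^{-1}\mathcal{P}^T = \mathcal{P}A_{uu}^{-1}$ (a consequence of the symmetry of $B_{pu}A_{uu}^{-1}B_{pu}^T$) together with the projector identity $\mathcal{P}A_{uu}^{-1}\mathcal{P}^T = \mathcal{P}A_{uu}^{-1}$, one unfolds the $(1,1)$-block of $A_{\mathcal{P}}^{-1}$ and verifies $\bar{S} = \tilde{B}_2 A_{\mathcal{P}}^{-1}\tilde{B}_2^T$. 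Since $A_{\mathcal{P}}$ is symmetric positive-definite, this yields the standard characterization $\bar{q}^T\bar{S}\bar{q} = \sup_{\mathbf{v}\in\mathbb{V}} (\bar{q}^T\tilde{B}_2\mathbf{v})^2/(\mathbf{v}^T A_{\mathcal{P}}\mathbf{v})$.

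The crucial step is to collapse this supremum to $\mathbb{V}^0$. For any $\mathbf{v} = (v,\bar{v})\in\mathbb{V}$, set $\mathbf{v}' := (\mathcal{P}v,\bar{v})\in\mathbb{V}^0$. The numerator is invariant, since $\bar{q}^T\tilde{B}_2\mathbf{v} = \bar{q}^T B_{\bar{p}u}\mathcal{P}v$ depends only on $\mathcal{P}v$. For the denominator, the splitting $v = \mathcal{P}v + \Pi v$ produces a cross-term that vanishes on account of $B_{pu}\mathcal{P} = 0$, so $\mathbf{v}^TA_{\mathcal{P}}\mathbf{v} = (\mathbf{v}')^T A_{\mathcal{P}}\mathbf{v}' + (\Pi v)^T A_{uu}(\Pi v) \ge (\mathbf{v}')^T A_{\mathcal{P}}\mathbf{v}'$. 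The supremum is therefore attained on $\mathbb{V}^0$, where \eqref{eq:innerprodA_Ap} gives $(\mathbf{v}')^T A_{\mathcal{P}}\mathbf{v}' = a_h(\boldsymbol{v}'_h,\boldsymbol{v}'_h)$ and $\bar{q}^T\tilde{B}_2\mathbf{v}' = b_2(\bar{q}_h,v'_h)$, completing the identity.

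From there the two bounds follow directly. The upper bound combines the continuity \eqref{eq:b2bounded} of $b_2$ with the coercivity \eqref{eq:stab_bound_ah} of $a_h$, giving $\bar{q}^T\bar{S}\bar{q} \le ((c_b^b)^2/c_a^s)\,\bar{q}^T\bar{M}\bar{q}$; the lower bound combines the continuity of $a_h$ from \eqref{eq:stab_bound_ah} with the divergence-free inf-sup \eqref{eq:stab_bhbar_div0}, yielding $\bar{q}^T\bar{S}\bar{q} \ge (\bar{\beta}^2/c_a^b)\,\bar{q}^T\bar{M}\bar{q}$. The main obstacle is the reduction to $\mathbb{V}^0$: this is precisely where the pressure-robust structure enters, as the projector $\mathcal{P}$ built into the condensed system forces the effective test velocities to be element-wise divergence-free, so that the \emph{constrained} inf-sup \eqref{eq:stab_bhbar_div0}, rather than the unconstrained \eqref{eq:stab_bhbar}, is the relevant stability ingredient.
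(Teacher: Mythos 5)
Your proposal is correct and takes essentially the same route as the paper's proof: the identification $\bar{S} = B_2\mathcal{R}A_{\mathcal{P}}^{-1}\mathcal{R}^TB_2^T$ with $\mathcal{R} = \mathrm{bdiag}(\mathcal{P},\bar{I}_u)$, the monotonicity $\langle A_{\mathcal{P}}\mathcal{R}{\bf v},\mathcal{R}{\bf v}\rangle \le \langle A_{\mathcal{P}}{\bf v},{\bf v}\rangle$ via the vanishing cross term (the paper's \cref{eq:ApRubAp}), and the constrained inf-sup \cref{eq:stab_bhbar_div0} together with \cref{eq:stab_bound_ah} and \cref{eq:b2bounded} are exactly the ingredients used there. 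The only difference is organizational: you establish the Rayleigh-quotient identity over $\boldsymbol{V}_h^0$ once and read off both bounds, whereas the paper runs the min--max and substitution arguments separately for the lower and upper estimates.
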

\begin{proof}
  Stability of $b_2$ (see \cref{eq:stab_bhbar_div0}) and equivalence of
  $a_h$ with $\tnorm{\cdot}_v$ in $\boldsymbol{V}_h$ (see
  \cref{eq:stab_bound_ah}) imply
  \begin{equation}
    \label{eq:stab_bhbar_div0_proj}
    \frac{\bar{\beta}}{\sqrt{c_a^b}}
    \le \min_{\substack{\bar{q}_h \in \mathbb{R}^{\bar{n}_q}
    \\
    \bar{q}_h \ne 0}}\max_{\boldsymbol{v}_h\in \boldsymbol{V}_h^0}\frac{ b_2(\bar{q}_h, v_h) }
    {a_h(\boldsymbol{v}_h, \boldsymbol{v}_h)^{1/2}\norm{\bar{q}_h}_{p}}.
  \end{equation}
  We may write \cref{eq:stab_bhbar_div0_proj} in matrix notation as:
  \begin{equation}
    \label{eq:minmaxmatrixnot}
    \frac{\bar{\beta}}{\sqrt{c_a^b}}
    \le \min_{\substack{\bar{q}_h \in \mathbb{R}^{\bar{n}_q}
    \\
    \bar{q}_h \ne 0}}\max_{{\bf v} \in \mathbb{V}^0}\frac{ \langle \bar{q}, B_2{\bf v} \rangle }
    {\langle A{\bf v}, {\bf v} \rangle^{1/2} \langle \bar{M}\bar{q}, \bar{q} \rangle^{1/2}}.
  \end{equation}
  Let $\mathcal{R}$ be a $2 \times 2$ block diagonal matrix with in the
  top left block $\mathcal{P}$ and the bottom right block the identity
  matrix $\bar{I}_u \in \mathbb{R}^{\bar{n}_u\times\bar{n}_u}$. Using
  \cref{eq:innerprodA_Ap} and the property
  $\langle \bar{q}, B_2{\bf v} \rangle = \langle
  \mathcal{R}^TB_2^T\bar{q}, {\bf v} \rangle$ for
  ${\bf v} \in \mathbb{V}^0$, following similar steps as in
  \cite[Chapter~3]{Elman:book},
  \begin{equation}
    \begin{split}
      \frac{\bar{\beta}}{\sqrt{c_a^b}}
      &\le \min_{\substack{\bar{q}_h \in \mathbb{R}^{\bar{n}_q} \\\bar{q}_h \ne 0}} \frac{1}{\langle \bar{M}\bar{q}, \bar{q} \rangle^{1/2}}
      \max_{{\bf v} \in \mathbb{V}^0}\frac{ \langle \mathcal{R}^TB_2^T\bar{q}, {\bf v} \rangle }
      {\langle A_{\mathcal{P}}{\bf v}, {\bf v} \rangle^{1/2} }
      \\
      &\le \min_{\substack{\bar{q}_h \in \mathbb{R}^{\bar{n}_q} \\\bar{q}_h \ne 0}} \frac{1}{\langle \bar{M}\bar{q}, \bar{q} \rangle^{1/2}}
      \max_{{\bf v} \in \mathbb{V}}\frac{ \langle \mathcal{R}^TB_2^T\bar{q}, {\bf v} \rangle }
      {\langle A_{\mathcal{P}}{\bf v}, {\bf v} \rangle^{1/2} }
      \\
      &= \min_{\substack{\bar{q}_h \in \mathbb{R}^{\bar{n}_q} \\\bar{q}_h \ne 0}} \frac{1}{\langle \bar{M}\bar{q}, \bar{q} \rangle^{1/2}}
      \max_{{\bf v} \in \mathbb{V}}\frac{ \langle \mathcal{R}^TB_2^T\bar{q}, A_{\mathcal{P}}^{-1/2}A_{\mathcal{P}}^{1/2}{\bf v} \rangle }
      {\langle A_{\mathcal{P}}^{1/2}{\bf v}, A_{\mathcal{P}}^{1/2}{\bf v} \rangle^{1/2} }
      \\
      &= \min_{\substack{\bar{q}_h \in \mathbb{R}^{\bar{n}_q} \\\bar{q}_h \ne 0}} \frac{1}{\langle \bar{M}\bar{q}, \bar{q} \rangle^{1/2}}
      \max_{{\bf w}=A_{\mathcal{P}}^{1/2}{\bf v}, \ {\bf v} \in \mathbb{V}}\frac{ \langle \mathcal{R}^TB_2^T\bar{q}, A_{\mathcal{P}}^{-1/2}{\bf w} \rangle }
      {\langle {\bf w}, {\bf w} \rangle^{1/2} }
      \\
      &= \min_{\substack{\bar{q}_h \in \mathbb{R}^{\bar{n}_q} \\\bar{q}_h \ne 0}} \frac{1}{\langle \bar{M}\bar{q}, \bar{q} \rangle^{1/2}}
      \max_{{\bf w} \ne 0}\frac{ \langle A_{\mathcal{P}}^{-1/2}\mathcal{R}^TB_2^T\bar{q}, {\bf w} \rangle }
      {\langle {\bf w}, {\bf w} \rangle^{1/2} }.
    \end{split}
  \end{equation}
  For a given $\bar{q}$, the maximum is reached for ${\bf w} =
  A_{\mathcal{P}}^{-1/2}\mathcal{R}^TB_2^T\bar{q}$, hence
  \begin{equation}
    \frac{\bar{\beta}}{\sqrt{c_a^b}}
    \le \min_{\substack{\bar{q}_h \in \mathbb{R}^{\bar{n}_q} \\\bar{q}_h \ne 0}}
    \frac{ \langle B_2 \mathcal{R} A_{\mathcal{P}}^{-1}\mathcal{R}^TB_2^T\bar{q}, \bar{q} \rangle^{1/2} }
    {\langle \bar{M}\bar{q}, \bar{q} \rangle^{1/2}}.
  \end{equation}
  By direct computation, and using that $\mathcal{P}^2 = \mathcal{P}$
  and $\mathcal{P}A_{uu}^{-1} = A_{uu}^{-1}\mathcal{P}^T$, we note that
  $B_2 \mathcal{R} A_{\mathcal{P}}^{-1}\mathcal{R}^TB_2^T = B_{\bar{p}u}
  \mathcal{P} (A_{\mathcal{P}}^{-1})_{11} \mathcal{P}^T B_{\bar{p}u}^T =
  \bar{S}$, proving the lower bound in \cref{eq:spectralequivalence}.

  For the upper bound, from stability and \cref{eq:stab_bound_ah} and
  boundedness \cref{eq:b2bounded} of $a_{h}$ on $\boldsymbol{V}_h$,
  \begin{equation}
    \label{eq:upperboundb2}
    b_2(\bar{q}_h, v_h)
    \le c_b^b \tnorm{\boldsymbol{v}_h}_v\norm{\bar{q}_h}_p
    \le \frac{c_b^b}{\sqrt{c_a^s}}a_h(\boldsymbol{v}_h, \boldsymbol{v}_h)^{1/2}\norm{\bar{q}_h}_p.
  \end{equation}
  In matrix form this reads as
  \begin{equation}
    \langle \bar{q}, B_2{\bf v} \rangle
    \le \frac{c_b^b}{\sqrt{c_a^s}} \langle A{\bf v}, {\bf v} \rangle^{1/2}\langle \bar{M}\bar{q}, \bar{q} \rangle^{1/2},
    \qquad \forall {\bf v} \in \mathbb{V}.
  \end{equation}
  Set ${\bf v} = \mathcal{R}{\bf w}$ for ${\bf w} \in \mathbb{V}$. Then,
  using \cref{eq:innerprodA_Ap} and since $\mathcal{R}{\bf w} \in
  \mathbb{V}^0$, we find
  \begin{equation}
    \label{eq:qB2wubAp}
      \langle \bar{q}, B_2\mathcal{R}{\bf w} \rangle
      \le
      \frac{c_b^b}{\sqrt{c_a^s}} \langle A\mathcal{R}{\bf w}, \mathcal{R}{\bf w} \rangle^{1/2}
      \langle \bar{M}\bar{q}, \bar{q} \rangle^{1/2}
      =
      \frac{c_b^b}{\sqrt{c_a^s}} \langle A_{\mathcal{P}}\mathcal{R}{\bf w}, \mathcal{R}{\bf w} \rangle^{1/2}
      \langle \bar{M}\bar{q}, \bar{q} \rangle^{1/2},
  \end{equation}
  which holds for all ${\bf w} \in \mathbb{V}$. We next show that
  $\langle A_{\mathcal{P}}\mathcal{R}{\bf w}, \mathcal{R}{\bf w}
  \rangle \le \langle A_{\mathcal{P}}{\bf w}, {\bf w} \rangle$ for all
  ${\bf w} \in \mathbb{V}$. By definition and using that
  $\mathcal{P}^T = A_{uu}\mathcal{P}A_{uu}^{-1}$,
  \begin{equation}
    \label{eq:ApRubAp}
    \begin{split}
      \langle A_{\mathcal{P}}\mathcal{R}{\bf w}, \mathcal{R}{\bf w} \rangle
      &=
      \begin{bmatrix}
        w^T\mathcal{P}^T & \bar{w}^T
      \end{bmatrix}
      \begin{bmatrix}
        A_{uu} & \mathcal{P}^TA_{\bar{u}u}^T \\
        A_{\bar{u}u}\mathcal{P} & A_{\bar{u}\bar{u}}
      \end{bmatrix}
      \begin{bmatrix}
        \mathcal{P}w \\ \bar{w}
      \end{bmatrix}
      \\
      &=w^T\mathcal{P}^TA_{uu}\mathcal{P}w + \bar{w}^TA_{\bar{u}u}\mathcal{P}w
      + (\mathcal{P}w)^TA_{\bar{u}u}\bar{w} + \bar{w}^TA_{\bar{u}\bar{u}}\bar{w}
      \\
      &=w^TA_{uu}\mathcal{P}w + \bar{w}^TA_{\bar{u}u}\mathcal{P}w
      + (\mathcal{P}w)^TA_{\bar{u}u}\bar{w} + \bar{w}^TA_{\bar{u}\bar{u}}\bar{w}
      \\
      &=w^TA_{uu}(I-\Pi)w + \bar{w}^TA_{\bar{u}u}\mathcal{P}w
      + (\mathcal{P}w)^TA_{\bar{u}u}\bar{w} + \bar{w}^TA_{\bar{u}\bar{u}}\bar{w}
      \\
      &=\langle A_{\mathcal{P}}{\bf w}, {\bf w} \rangle - \langle A_{uu}\Pi w, w \rangle
      \\
      &=\langle A_{\mathcal{P}}{\bf w}, {\bf w} \rangle - \langle B_{pu}^TS^{-1}_{pp}B_{pu} w, w \rangle
      \\
      &=\langle A_{\mathcal{P}}{\bf w}, {\bf w} \rangle - \langle B_{pu} w, B_{pu}w \rangle_{S^{-1}_{pp}}
      \\
      &\le \langle A_{\mathcal{P}}{\bf w}, {\bf w} \rangle,
    \end{split}
  \end{equation}
  where we used that $S_{pp} = B_{pu}A_{uu}^{-1}B_{pu}^T$ is symmetric
  and positive-definite. Combining \cref{eq:qB2wubAp} and
  \cref{eq:ApRubAp} we obtain
  \begin{equation}
    \langle \bar{q}, B_2\mathcal{R}{\bf w} \rangle
    \le
    \frac{c_b^b}{\sqrt{c_a^s}} \langle A_{\mathcal{P}}{\bf w}, {\bf w} \rangle^{1/2}
    \langle \bar{M}\bar{q}, \bar{q} \rangle^{1/2} \qquad \forall {\bf w} \in \mathbb{V}.
  \end{equation}
  The result follows after dividing both sides by $\langle
  A_{\mathcal{P}}{\bf w}, {\bf w} \rangle^{1/2}\langle \bar{M}\bar{q},
  \bar{q} \rangle^{1/2}$ and following similar steps as used to obtain
  the lower bound.
\end{proof}

\begin{lemma}
  \label{lem:spec_equiv_Sbar_BpuABpuT}
  Let $\bar{S}$ be the matrix defined in \cref{eq:facetpres_schurcomp}
  and let $A_{uu}$ and $B_{\bar{p}u}$ be matrices defined in,
  respectively, \cref{eq:matrix_A} and \cref{eq:def_B2}. Then
  $\bar{S}$ is spectrally equivalent to
  $B_{\bar{p}u}A_{uu}^{-1}B_{\bar{p}u}^T$.
\end{lemma}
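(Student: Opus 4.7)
The plan is to deduce the spectral equivalence by transitivity through the trace pressure mass matrix $\bar{M}$. Lemma~\ref{lem:spec_equiv_Mbar_Sbar} already delivers $\bar{S} \sim \bar{M}$, so it suffices to establish the auxiliary spectral equivalence $B_{\bar{p}u} A_{uu}^{-1} B_{\bar{p}u}^T \sim \bar{M}$ with constants independent of $h$; the desired $\bar{S} \sim B_{\bar{p}u} A_{uu}^{-1} B_{\bar{p}u}^T$ then follows by combining the two chains of estimates, with constants that depend only on $c_a^s,c_a^b,c_b^b,\bar\beta$.

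The starting point for the auxiliary equivalence is the elementary identity
\begin{equation*}
  \bar{q}^T B_{\bar{p}u} A_{uu}^{-1} B_{\bar{p}u}^T \bar{q}
  = \sup_{v_h \in V_h \setminus \{0\}} \frac{b_2(\bar{q}_h, v_h)^2}{a_h((v_h, 0), (v_h, 0))},
\end{equation*}
valid because $B_2 = [B_{\bar{p}u}\ 0]$, so $b_2$ depends only on $v_h$, and $v^T A_{uu} v = a_h((v_h, 0), (v_h, 0))$. For the upper bound $B_{\bar{p}u} A_{uu}^{-1} B_{\bar{p}u}^T \lesssim \bar{M}$, I would combine the continuity \cref{eq:b2bounded} of $b_2$ with the coercivity \cref{eq:stab_bound_ah} of $a_h$ applied to the pair $(v_h, 0)$; this is the same routine manipulation as in the upper-bound half of Lemma~\ref{lem:spec_equiv_Mbar_Sbar} and yields the constant $(c_b^b)^2/c_a^s$.

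The main obstacle is the lower bound $\bar{M} \lesssim B_{\bar{p}u} A_{uu}^{-1} B_{\bar{p}u}^T$, because the only available inf-sup on $b_2$, namely \cref{eq:stab_bhbar}, takes its supremum over the full pair $\boldsymbol{v}_h \in \boldsymbol{V}_h$, and enforcing $\bar{v}_h = 0$ enlarges $\tnorm{\boldsymbol{v}_h}_v$ rather than reducing it, so the existing estimate cannot be transferred directly. My plan to overcome this is to exhibit a good cell-velocity test function explicitly: given $\bar{q}_h$, build $v_h \in V_h$ element-wise so that $v_h \cdot n$ is proportional to $h_K \bar{q}_h$ on each $\partial K$ via a BDM-type facet lifting. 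Standard scaling arguments then give $b_2(\bar{q}_h, v_h) \gtrsim \norm{\bar{q}_h}_p^2$ together with $\tnorm{(v_h, 0)}_v \lesssim \norm{\bar{q}_h}_p$, which, after invoking coercivity of $a_h$ once more, yields the desired lower bound with an $h$-independent constant. Combining this auxiliary equivalence with Lemma~\ref{lem:spec_equiv_Mbar_Sbar} completes the proof.
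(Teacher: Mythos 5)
Your proposal is correct and follows essentially the same route as the paper: spectral equivalence by transitivity through the trace pressure mass matrix $\bar{M}$, combining \cref{lem:spec_equiv_Mbar_Sbar} with the equivalence $B_{\bar{p}u}A_{uu}^{-1}B_{\bar{p}u}^T \sim \bar{M}$. The only difference is that the paper obtains the latter equivalence by citing Theorem~3 of \cite{Rhebergen:2018b}, whereas you sketch a direct proof of it (Rayleigh-quotient identity plus a BDM-type facet lifting for the lower bound), which is sound and is essentially the argument behind the cited result.
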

\begin{proof}
  We first remark that a result of \cite[Theorem~3]{Rhebergen:2018b} is
  that $B_{\bar{p}u}A_{uu}^{-1}B_{\bar{p}u}^T$ is spectrally equivalent
  to the trace pressure mass matrix $\bar{M}$ defined in
  \cref{eq:def_cell_facet_mmatrix}. Since $\bar{M}$ is spectrally
  equivalent to $\bar{S}$ by \cref{lem:spec_equiv_Mbar_Sbar}, the result
  follows.
\end{proof}

We now formulate a preconditioner for the statically condensed linear
system in \cref{eq:condensed2form_up} using
\Cref{lem:spec_equiv_Mbar_Sbar,lem:spec_equiv_Sbar_BpuABpuT}.

\begin{theorem}[A preconditioner for the statically condensed discrete
  Stokes problem]
  \label{thm:P2x2precon}
  Let $\bar{\mathbb{A}}$ be the matrix given in
  \cref{eq:condensed2form_up}, let $\bar{M}$ be the trace pressure
  mass matrix defined in \cref{eq:def_cell_facet_mmatrix}, let
  $A_{uu}$ and $B_{\bar{p}u}$ be matrices defined in, respectively,
  \cref{eq:matrix_A} and \cref{eq:def_B2}, and let $\bar{A}^d$ be the
  matrix defined in \cref{eq:defAbard_up}. Let $\bar{R}^d$ be an
  operator that is spectrally equivalent to $\bar{A}^d$ and define
  \begin{equation}
    \label{eq:P22}
    \mathbb{P}_{\bar{M}}^{-1} =
    \begin{bmatrix}
      (\bar{R}^d)^{-1} & 0 \\ 0 & \bar{M}^{-1}
    \end{bmatrix},
    \qquad
    \mathbb{P}_{BAB}^{-1} =
    \begin{bmatrix}
      (\bar{R}^d)^{-1} & 0 \\ 0 & (B_{\bar{p}u}A_{uu}^{-1}B_{\bar{p}u}^T)^{-1}
    \end{bmatrix}.
  \end{equation}
  There exist positive constants $C_1$, $C_2$, $C_3$, $C_4$,
  independent of $h$, such that eigenvalues of both
  $\mathbb{P}_{\bar{M}}^{-1}\bar{\mathbb{A}}$ and
  $\mathbb{P}_{BAB}^{-1}\bar{\mathbb{A}}$ satisfy
  $\lambda \in [-C_1, -C_2] \cup [C_3, C_4]$.
\end{theorem}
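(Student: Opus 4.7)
My plan is to recognise $\bar{\mathbb{A}}$ as a standard symmetric saddle-point matrix and then apply the Pestana--Wathen theory \cite{Pestana:2015} for block diagonal preconditioners. Writing
\begin{equation*}
\bar{\mathbb{A}} = \begin{bmatrix} \bar{A}^d & \bar{B}^T \\ \bar{B} & -\bar{C} \end{bmatrix},
\qquad
\bar{B} := -B_{\bar{p}u}\mathcal{P}A_{uu}^{-1}A_{\bar{u}u}^T,
\qquad
\bar{C} := B_{\bar{p}u}\mathcal{P}A_{uu}^{-1}B_{\bar{p}u}^T,
\end{equation*}
the first step is to verify the structural hypotheses of that theory: $\bar{\mathbb{A}}$ is symmetric (using $\mathcal{P}A_{uu}^{-1} = A_{uu}^{-1}\mathcal{P}^T$), the $(1,1)$ block $\bar{A}^d$ is SPD by \cref{lem:barAdspdmat}, and $\bar{C}$ is symmetric positive semidefinite since $\bar{C} = B_{\bar{p}u}\mathcal{P}A_{uu}^{-1}\mathcal{P}^TB_{\bar{p}u}^T$ by $\mathcal{P}^2 = \mathcal{P}$. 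A short direct computation then shows that the (negative) Schur complement of $\bar{\mathbb{A}}$ with respect to its $(1,1)$ block equals exactly $\bar{S}$ as given in~\cref{eq:facetpres_schurcomp}.

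Next I would verify that both $\mathbb{P}_{\bar{M}}$ and $\mathbb{P}_{BAB}$ are SPD block diagonal operators whose blocks are spectrally equivalent to $\bar{A}^d$ and $\bar{S}$, respectively. The $(1,1)$ block is $\bar{R}^d$, which is spectrally equivalent to $\bar{A}^d$ by hypothesis. For the $(2,2)$ block, \cref{lem:spec_equiv_Mbar_Sbar} directly gives spectral equivalence between $\bar{M}$ and $\bar{S}$ with explicit constants involving $\bar{\beta}^2/c_a^b$ and $(c_b^b)^2/c_a^s$, handling $\mathbb{P}_{\bar{M}}$; for $\mathbb{P}_{BAB}$ the equivalence between $B_{\bar{p}u}A_{uu}^{-1}B_{\bar{p}u}^T$ and $\bar{S}$ is precisely the content of \cref{lem:spec_equiv_Sbar_BpuABpuT}. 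Composing the two spectral equivalences (through $\bar{M}$ for $\mathbb{P}_{BAB}$) yields uniform-in-$h$ constants $\delta_1, \Delta_1, \delta_2, \Delta_2 > 0$ with $\delta_1 \bar{R}^d \preceq \bar{A}^d \preceq \Delta_1 \bar{R}^d$ and $\delta_2 P_2 \preceq \bar{S} \preceq \Delta_2 P_2$ for $P_2 \in \{\bar{M},\ B_{\bar{p}u}A_{uu}^{-1}B_{\bar{p}u}^T\}$.

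Given these inputs, I would invoke the generalised Rusten--Winther / Pestana--Wathen eigenvalue localisation result \cite{Pestana:2015} for a symmetric saddle-point system with SPD $(1,1)$ block, SPSD $(2,2)$ block, and block-diagonal SPD preconditioner whose blocks are spectrally equivalent to the $(1,1)$ block and to the (negative) Schur complement. The conclusion is that the eigenvalues of $\mathbb{P}^{-1}\bar{\mathbb{A}}$ all lie in a set $[-C_1, -C_2] \cup [C_3, C_4]$ with $0 < C_2 \le C_1$ and $0 < C_3 \le C_4$ depending only on $\delta_i,\Delta_i$; in particular they are independent of $h$. Applying this verbatim to both $\mathbb{P}_{\bar{M}}$ and $\mathbb{P}_{BAB}$ yields the desired bound.

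The main obstacle is not a deep calculation but the careful invocation of the Pestana--Wathen framework with a non-trivial $-\bar{C}$ block and an implicitly given Schur complement $\bar{S}$: one must make sure that the spectral equivalences are propagated through the Schur complement cleanly (so that the two-interval result applies with constants depending only on the four spectral ratios), and that the orientation conventions (negative vs.\ positive Schur complement) match the theorem as stated in \cite{Pestana:2015}. Once this bookkeeping is done, the result follows immediately from the cited theorem combined with \cref{lem:spec_equiv_Mbar_Sbar,lem:spec_equiv_Sbar_BpuABpuT}.
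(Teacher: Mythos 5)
Your proposal is correct and follows essentially the same route as the paper: identify $\bar{\mathbb{A}}$ as a symmetric saddle-point system with SPD $(1,1)$ block $\bar{A}^d$ (\cref{lem:barAdspdmat}) and pressure Schur complement $\bar{S}$, use \cref{lem:spec_equiv_Mbar_Sbar,lem:spec_equiv_Sbar_BpuABpuT} for the spectral equivalence of the $(2,2)$ preconditioner blocks with $\bar{S}$, and conclude via the Pestana--Wathen eigenvalue localisation theorem. The only cosmetic difference is that the paper cites the invertibility of $\bar{S}$ explicitly via \cref{lem:schurcomp2fieldinvert}, whereas in your argument it follows implicitly from the spectral equivalence with the SPD matrix $\bar{M}$.
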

\begin{proof}
  By \cref{lem:spec_equiv_Mbar_Sbar,lem:spec_equiv_Sbar_BpuABpuT} we
  know that $\bar{M}$ and $B_{\bar{p}u}A_{uu}^{-1}B_{\bar{p}u}^T$ are
  spectrally equivalent to the pressure Schur complement $\bar{S}$ of
  $\bar{\mathbb{A}}$. Furthermore, $\bar{S}$ is invertible (see
  \cref{lem:schurcomp2fieldinvert}) and $\bar{A}^d$ is symmetric
  positive-definite (see \cref{lem:barAdspdmat}). The result then
  follows by direct application of \cite[Theorem~5.2]{Pestana:2015}.
\end{proof}

A consequence of \cref{thm:P2x2precon} is that the number of MINRES
iterations needed to solve \cref{eq:condensed2form_up} when
preconditioned by $\mathbb{P}_{\bar{M}}^{-1}$ or by
$\mathbb{P}_{BAB}^{-1}$ to a given tolerance will be independent of the
size of the discrete problem. In other words,
$\mathbb{P}_{\bar{M}}^{-1}$ and $\mathbb{P}_{BAB}^{-1}$ are optimal
preconditioners for \cref{eq:condensed2form_up}.

\subsection{Choosing $(\bar{R}^d)^{-1}$}
\label{ss:char_barRd}

The preconditioners in \cref{eq:P22} require an operator $\bar{R}^d$
that is spectrally equivalent to $\bar{A}^d$. For notational purposes,
let us denote by $(\cdot)^{MG}$ the application of multigrid to
approximate the inverse of $(\cdot)$. In this section we then discuss
the following three choices: $(\bar{R}^d)^{-1} = (\bar{A}^d)^{-1}$,
$(\bar{R}^d)^{-1} = (\bar{A}^d)^{MG}$, and
$(\bar{R}^d)^{-1} = (\bar{A}_{\gamma})^{MG}$ where $\bar{A}_{\gamma}$
will be defined to approximate $\bar{A}^d$.

The optimal choice for $(\bar{R}^d)^{-1}$ is $(\bar{A}^d)^{-1}$.
However, computing the inverse (action) of $\bar{A}^d$ is too costly
in practice. The usual choice for Stokes solvers is to replace the
inverse of the discrete vector Laplacian by a V-cycle multigrid
method. However, as we show in the following two lemmas, standard
multigrid methods designed for $H^1$-like operators are not guaranteed
to perform well on~$\bar{A}^d$.

In what follows, we require the following definition of an orthogonal
subspace $Y^{\perp}$ of a linear subspace $Y$ of $\mathbb{R}^n$:
$Y^{\perp} := \cbr[0]{x \in \mathbb{R}^n\,:\, x^Ty=0\ \forall y \in
  Y}$.

\begin{lemma}
  \label{lem:KerPAinvImBpuT}
  Let $\mathcal{P}$ be given by \cref{eq:projection}. Then
  \begin{subequations}
    \begin{align}
      \label{eq:KerPtPit_a}
      \Ker\, \mathcal{P}^T &= \Image\, B_{pu}^T,
      \\
      \label{eq:KerPtPit_b}
      \Ker\, \Pi^T &= \Ker\, (B_{pu}A_{uu}^{-1}).
    \end{align}
  \end{subequations}
\end{lemma}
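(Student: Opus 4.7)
The plan is to write $\Pi^T$ and $\mathcal{P}^T$ explicitly and then verify each set inclusion by direct substitution, using only that $A_{uu}$ is symmetric, that $S_{pp} := B_{pu}A_{uu}^{-1}B_{pu}^T$ is symmetric positive-definite, and that $B_{pu}$ has full row rank (equivalently, $B_{pu}^T$ is injective), all of which were already established in the proof of \cref{lem:mat2fieldInv}. Since $A_{uu}^T = A_{uu}$ and $S_{pp}^T = S_{pp}$, transposing the definition in \cref{eq:projection} gives $\Pi^T = B_{pu}^T S_{pp}^{-1} B_{pu} A_{uu}^{-1}$ and $\mathcal{P}^T = I - \Pi^T$.

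For \cref{eq:KerPtPit_a}, I would establish the two inclusions separately. For $\text{Im}\,B_{pu}^T \subseteq \text{Ker}\,\mathcal{P}^T$, let $x = B_{pu}^T y$ and compute
\begin{equation*}
  \mathcal{P}^T x
  = B_{pu}^T y - B_{pu}^T S_{pp}^{-1} B_{pu} A_{uu}^{-1} B_{pu}^T y
  = B_{pu}^T y - B_{pu}^T S_{pp}^{-1} S_{pp}\, y
  = 0,
\end{equation*}
using the very definition of $S_{pp}$. For the reverse inclusion, if $\mathcal{P}^T x = 0$ then $x = \Pi^T x = B_{pu}^T\bigl(S_{pp}^{-1} B_{pu} A_{uu}^{-1} x\bigr)$, which visibly lies in $\text{Im}\,B_{pu}^T$.

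For \cref{eq:KerPtPit_b}, the inclusion $\text{Ker}\,(B_{pu}A_{uu}^{-1}) \subseteq \text{Ker}\,\Pi^T$ is immediate from the formula $\Pi^T = B_{pu}^T S_{pp}^{-1}(B_{pu}A_{uu}^{-1})$. For the reverse inclusion, if $\Pi^T x = 0$, I would apply $B_{pu}^T S_{pp}^{-1}$ is injective: since $S_{pp}^{-1}$ is invertible and $B_{pu}^T$ is injective (because $B_{pu}$ has full row rank, by the discrete inf-sup condition \cref{eq:bound_bh}), it follows that $B_{pu} A_{uu}^{-1} x = 0$.

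The argument is essentially a bookkeeping exercise in linear algebra; the only substantive ingredient beyond direct substitution is the injectivity of $B_{pu}^T$ used in part (b), which is the step that could trip one up if the inf-sup-based full-rank property of $B_{pu}$ were not available. Everything else is collapsing the product $B_{pu} A_{uu}^{-1} B_{pu}^T$ into $S_{pp}$ and cancelling.
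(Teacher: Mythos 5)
Your proof is correct, but it takes a different route from the paper's. The paper argues abstractly: it identifies $\mathcal{P}$ as an oblique projector with $\text{Im}\,\mathcal{P} = \text{Ker}\,B_{pu} = (\text{Im}\,B_{pu}^T)^{\perp}$ and $\text{Im}\,\Pi = \text{Im}\,(A_{uu}^{-1}B_{pu}^T) = (\text{Ker}\,(B_{pu}A_{uu}^{-1}))^{\perp}$, and then invokes the general identity $\text{Ker}\,M^T = (\text{Im}\,M)^{\perp}$ (cited from Boffi's book) together with $(Y^{\perp})^{\perp}=Y$ to transfer these image characterizations to kernel characterizations of the transposes. You instead write $\Pi^T = B_{pu}^T S_{pp}^{-1} B_{pu} A_{uu}^{-1}$ explicitly (valid because $A_{uu}$ and $S_{pp}$ are symmetric) and check the four inclusions by substitution; the only nontrivial ingredients are the cancellation $B_{pu}A_{uu}^{-1}B_{pu}^T = S_{pp}$, the projector identity $x=\Pi^T x$ on $\text{Ker}\,\mathcal{P}^T$, and the injectivity of $B_{pu}^T$, which, as you note, comes from the inf-sup condition and is also what the paper needs for $S_{pp}$ to be invertible and for $\text{Im}\,\mathcal{P}$ to fill all of $\text{Ker}\,B_{pu}$. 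Your version is more elementary and self-contained (no external theorem needed, and the role of full rank is made explicit in one identifiable step); the paper's version is shorter and makes the geometric content of the lemma (orthogonality of the relevant subspaces) more visible, which is the form in which it is used later in \cref{lem:properties-barAd}. Both are sound.
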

\begin{proof}
  We first show \cref{eq:KerPtPit_a}. We know that $\mathcal{P}$ is an
  oblique projector onto
  $\Ker\, B_{pu} = (\Image\, B_{pu}^T)^{\perp}$ (the equality
  is by \cite[Theorem 3.1.1]{Boffi:book}), i.e.,
  $\Image\, \mathcal{P} = (\Image\, B_{pu}^T)^{\perp}$. Then,
  again using \cite[Theorem 3.1.1]{Boffi:book},
  \begin{equation}
    \Ker\, \mathcal{P}^T
    = (\Image\, \mathcal{P})^{\perp}
    = \Image\, B_{pu}^T.
  \end{equation}

  The proof for \cref{eq:KerPtPit_b} is similar and is given here for
  completeness. First, note that
  $\Image\, \Pi = \Image\, (A_{uu}^{-1}B_{pu}^T) = (\Ker\,
  (B_{pu}A_{uu}^{-1}))^{\perp}$ (see, e.g., \cite[Page
  19]{Benzi:2005}). Then, using \cite[Theorem 3.1.1]{Boffi:book},
  \begin{equation}
    \Ker\, \Pi^T = (\Image\, \Pi)^{\perp} = \Ker\,
    (B_{pu}A_{uu}^{-1}).
  \end{equation}
\end{proof}

\begin{lemma}
  \label{lem:properties-barAd}
  For $\bar{x} \in \mathbb{R}^{\bar{n}_u}$ such that
  $A_{\bar{u}u}^T\bar{x} \in \Ker\, (B_{pu}A_{uu}^{-1})$ the
  following holds:
  \begin{equation}
    \label{eq:propertyAd_1}
    c_1\tnorm{\bar{x}_h}_h^2 \le
    \langle \bar{A}^d\bar{x}, \bar{x} \rangle
    = \langle \bar{A}\bar{x}, \bar{x} \rangle
    \le
    c_2\tnorm{\bar{x}_h}_h^2.
  \end{equation}
  If $\bar{x} \in \mathbb{R}^{\bar{n}_u}$ is such that
  $A_{\bar{u}u}^T\bar{x} \in \Image\, B_{pu}^T = (\Ker\,
  B_{pu})^{\perp}$, then
  \begin{equation}
    \label{eq:propertyAd_2}
    c_1\tnorm{\bar{x}_h}_h^2
    \le
    \langle \bar{A}^d\bar{x}, \bar{x} \rangle
    \le c_2 h^{-2}\tnorm{\bar{x}_h}_h^2.
  \end{equation}
\end{lemma}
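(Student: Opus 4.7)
The plan is to decompose $\bar{A}^d$ and exploit the kernel/image structure of the oblique projection $\mathcal{P}$ to collapse $\bar{A}^d$ into a tractable form in each of the two cases. Using $\mathcal{P} = I - \Pi$, I would first write
\begin{equation*}
  \bar{A}^d = \bar{A} + A_{\bar{u}u}\Pi A_{uu}^{-1}A_{\bar{u}u}^T,
\end{equation*}
and then insert the explicit formula $\Pi = A_{uu}^{-1}B_{pu}^T S_{pp}^{-1} B_{pu}$, with $S_{pp} = B_{pu}A_{uu}^{-1}B_{pu}^T$, to make the role of the hypothesis on $A_{\bar{u}u}^T\bar{x}$ visible. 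In both cases the behaviour of $\langle \bar{A}^d\bar{x}, \bar{x}\rangle$ will follow from which of the two complementary subspaces in \cref{lem:KerPAinvImBpuT} the vector $A_{\bar{u}u}^T\bar{x}$ belongs to.

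For \cref{eq:propertyAd_1}: if $A_{\bar{u}u}^T\bar{x} \in \text{Ker}\,(B_{pu}A_{uu}^{-1})$, then $B_{pu}A_{uu}^{-1}A_{\bar{u}u}^T\bar{x}=0$, so $\Pi A_{uu}^{-1}A_{\bar{u}u}^T\bar{x}=0$, giving the identity $\langle \bar{A}^d\bar{x},\bar{x}\rangle = \langle \bar{A}\bar{x},\bar{x}\rangle$. The two-sided equivalence with $\tnorm{\bar{x}_h}_h^2$ is then the standard fact that $\bar{A}$, being the Schur complement of $A$ with respect to $A_{uu}$, realises a discrete harmonic extension: $\langle\bar{A}\bar{x},\bar{x}\rangle = \min_{v_h \in V_h} a_h(\boldsymbol{v}_h,\boldsymbol{v}_h)$ with $\boldsymbol{v}_h=(v_h,\bar{v}_h)$, and by \cref{eq:stab_bound_ah} this is equivalent to $\min_{v_h}\tnorm{\boldsymbol{v}_h}_v^2$, which is in turn equivalent to $\tnorm{\bar{x}_h}_h^2$ by the discrete trace-lifting arguments in \cite[Lemma 3.7]{cockburn:2014} and \cite[Proof of Theorem 2.3]{Gopalakrishnan:2003}.

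For \cref{eq:propertyAd_2}: if $A_{\bar{u}u}^T\bar{x} \in \text{Im}\,B_{pu}^T$, write $A_{\bar{u}u}^T\bar{x}=B_{pu}^T z$ and compute $\Pi A_{uu}^{-1}A_{\bar{u}u}^T\bar{x} = A_{uu}^{-1}B_{pu}^T S_{pp}^{-1} S_{pp} z = A_{uu}^{-1}A_{\bar{u}u}^T\bar{x}$, so $\mathcal{P}A_{uu}^{-1}A_{\bar{u}u}^T\bar{x}=0$. Plugging into the defining formula \cref{eq:defAbard_up} yields $\langle \bar{A}^d\bar{x},\bar{x}\rangle = \langle A_{\bar{u}\bar{u}}\bar{x},\bar{x}\rangle = \alpha\sum_{K}h_K^{-1}\|\bar{v}_h\|_{\partial K}^2$. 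The lower bound follows immediately since $\|\bar{v}_h\|_{\partial K}^2 \ge \|\bar{v}_h - m_K(\bar{v}_h)\|_{\partial K}^2$, so $\langle A_{\bar{u}\bar{u}}\bar{x},\bar{x}\rangle \ge \alpha \tnorm{\bar{x}_h}_h^2$. The upper bound follows from $\sum_K h_K^{-1}\|\bar{v}_h\|_{\partial K}^2 \le h^{-2}\sum_K h_K\|\bar{v}_h\|_{\partial K}^2 = h^{-2}\|\bar{v}_h\|_{h,0}^2$ combined with the Poincaré-type inequality \cref{eq:Poincare_facet}.

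The main obstacle is the trace-norm equivalence $\langle \bar{A}\bar{x},\bar{x}\rangle \sim \tnorm{\bar{x}_h}_h^2$ used in case one; this is not immediate from the coercivity/boundedness of $a_h$ alone because it requires that the minimum of $\tnorm{\boldsymbol{v}_h}_v^2$ over all element-velocity extensions $v_h$ of a given trace $\bar{v}_h$ be controlled from below by the $H^1$-like trace seminorm, which is exactly the HDG harmonic-lifting estimate that must be invoked from prior work. The manipulations around $\Pi$ and $\mathcal{P}$ themselves are purely algebraic and, given \cref{lem:KerPAinvImBpuT}, essentially unavoidable.
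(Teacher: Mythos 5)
Your proposal is correct and follows essentially the same route as the paper: the same decomposition $\bar{A}^d = \bar{A} + A_{\bar{u}u}\Pi A_{uu}^{-1}A_{\bar{u}u}^T$, the same collapse to $\langle\bar{A}\bar{x},\bar{x}\rangle$ or $\langle A_{\bar{u}\bar{u}}\bar{x},\bar{x}\rangle$ depending on which subspace $A_{\bar{u}u}^T\bar{x}$ lies in, the same appeal to the prior-work equivalence of $\bar{A}$ with $\tnorm{\cdot}_h^2$ (this is \cite[Lemma~5]{Rhebergen:2018b}, which the paper cites explicitly), and the same $h^{-2}$ scaling plus Poincar\'e inequality \cref{eq:Poincare_facet} for the upper bound in \cref{eq:propertyAd_2}. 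The only differences are cosmetic: you verify the kernel/image facts directly from the explicit formula for $\Pi$ rather than via the transposed identities of \cref{lem:KerPAinvImBpuT}, and your lower bound in the second case ($\norm{\bar{x}_h}_{\partial K}\ge\norm{\bar{x}_h-m_K(\bar{x}_h)}_{\partial K}$) is a slightly more elementary route than the paper's observation that the lower bound holds for all $\bar{x}$ by positive semidefiniteness of the correction term.
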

\begin{proof}
  We first note that the lower bound is always true because (see the
  proof of \cref{lem:barAdspdmat})
  \begin{equation}
    c_1\tnorm{\bar{x}_h}_h^2 \le
    \langle \bar{A}\bar{x}, \bar{x} \rangle
    \le
    \langle \bar{A}\bar{x}, \bar{x} \rangle
    + \langle A_{\bar{u}u}\Pi A_{uu}^{-1}A_{\bar{u}u}^T\bar{x}, \bar{x} \rangle
    =
    \langle \bar{A}^d\bar{x}, \bar{x} \rangle \qquad \forall \bar{x} \in \mathbb{R}^{\bar{n}_u},
  \end{equation}
  where the first inequality is by \cite[Lemma 5]{Rhebergen:2018b}.

  To prove the upper bound in \cref{eq:propertyAd_1} we note that if
  $\bar{x}$ is such that
  $A_{\bar{u}u}^T\bar{x} \in \Ker\, (B_{pu}A_{uu}^{-1})$ then by
  \cref{lem:KerPAinvImBpuT}
  \begin{equation}
    \langle A_{\bar{u}u}\Pi A_{uu}^{-1}A_{\bar{u}u}^T\bar{x}, \bar{x} \rangle
    =
    \langle A_{\bar{u}u}A_{uu}^{-1} \Pi^T A_{\bar{u}u}^T\bar{x}, \bar{x} \rangle
    = 0,
  \end{equation}
  where we used that $\Pi A_{uu}^{-1} = A_{uu}^{-1} \Pi^T$. It follows
  that
  \begin{equation}
    \begin{split}
      \langle \bar{A}^d\bar{x}, \bar{x} \rangle
      &= \langle (A_{\bar{u}\bar{u}} -A_{\bar{u}u}\mathcal{P}A_{uu}^{-1}A_{\bar{u}u}^T)\bar{x}, \bar{x} \rangle
      = \langle (A_{\bar{u}\bar{u}} -A_{\bar{u}u}(I - \Pi)A_{uu}^{-1}A_{\bar{u}u}^T)\bar{x}, \bar{x} \rangle
      \\
      &= \langle (A_{\bar{u}\bar{u}} -A_{\bar{u}u}A_{uu}^{-1}A_{\bar{u}u}^T)\bar{x}, \bar{x} \rangle
      = \langle \bar{A}\bar{x}, \bar{x} \rangle.
    \end{split}
  \end{equation}
  The result \cref{eq:propertyAd_1} then follows from \cite[Lemma
  5]{Rhebergen:2018b}.

  Next we prove the upper bound in \cref{eq:propertyAd_2}. If
  $\bar{x}$ is such that
  $A_{\bar{u}u}^T\bar{x} \in \Image\, B_{pu}^T$, then
  $\mathcal{P}A_{uu}^{-1}A_{\bar{u}u}^T\bar{x} =
  \mathcal{P}A_{uu}^{-1}\mathcal{P}^TA_{\bar{u}u}^T\bar{x} = 0$ by
  \cref{lem:KerPAinvImBpuT}. Therefore,
  \begin{equation}
    \langle \bar{A}^d\bar{x}, \bar{x} \rangle = \langle A_{\bar{u}\bar{u}}\bar{x}, \bar{x} \rangle.
  \end{equation}
  It follows that
  \begin{equation}
    \begin{split}
      \langle A_{\bar{u}\bar{u}}\bar{x}, \bar{x} \rangle
      &= a_h((0, \bar{x}_h), (0, \bar{x}_h))
      \\
      &= \alpha \sum_{K \in \partial T} h_K^{-1} \int_{\partial K}\bar{x}_h \cdot \bar{x}_h \dif s
      \\
      &= \alpha \sum_{K \in \partial T} h_K^{-2} \del{ h_K\int_{\partial K}\bar{x}_h \cdot \bar{x}_h \dif s}
      \\
      &\le c h^{-2}\norm{\bar{x}_h}_{h,0}^2.
    \end{split}
  \end{equation}
  The result follows after applying \cref{eq:Poincare_facet}.
\end{proof}

\Cref{lem:properties-barAd} implies that if $\bar{x} \in
\mathbb{R}^{\bar{n}_u}$ is such that $A_{\bar{u}u}^T\bar{x} \in
\Ker\, (B_{pu}A_{uu}^{-1})$, then standard multigrid designed for
$H^1$-like operators will be a good preconditioner for $\bar{A}^d$.
However, if $\bar{x} \in \mathbb{R}^{\bar{n}_u}$ is such that
$A_{\bar{u}u}^T\bar{x} \in \Image\, B_{pu}^T$, we cannot conclude
anything about multigrid convergence. In fact, in numerical examples
(see \cref{sec:num_examples}) we see deterioration in performance of the
solver when the mesh is refined if we choose $(\bar{R}^d)^{-1} =
(\bar{A}^d)^{MG}$. The upper bound in \cref{eq:propertyAd_2} explains
this deterioration. The main conclusion we can draw from
\cref{lem:properties-barAd} then is that we should not choose
$(\bar{R}^d)^{-1} = (\bar{A}^d)^{MG}$ with multigrid methods designed
for $H^1$-like operators. In what follows we introduce a new
approximation to~$\bar{A}^d$.

We propose to set $(\bar{R}^d)^{-1} = (\bar{A}_{\gamma})^{MG}$ where
\begin{equation}
  \label{eq:def_Ag}
  \bar{A}_{\gamma} = -A_{\bar{u}u}\widehat{A}_{\gamma}^{-1}A_{\bar{u}u}^T + A_{\bar{u}\bar{u}},
\end{equation}
and where,
\begin{equation}
  \label{eq:Am_definition}
  \begin{split}
    \widehat{A}_{\gamma}^{-1}
    &=
    (A_{uu} + \gamma B_{pu}^TM^{-1}B_{pu})^{-1}
    \\
    &=
    A_{uu}^{-1} - A_{uu}^{-1}B_{pu}^T(\gamma^{-1}M + B_{pu}A_{uu}^{-1}B_{pu}^T)^{-1}B_{pu}A_{uu}^{-1},
  \end{split}
\end{equation}
where the second equality is by the Woodbury matrix identity which holds
for $0< \gamma < \infty$. The $\gamma B_{pu}^TM^{-1}B_{pu}$ term is
often added to $A_{uu}$ in the discretization of the linearized
Navier--Stokes equations to construct augmented Lagrangian
preconditioners (see, for example, \cite{Benzi:2011}). The $\gamma
B_{pu}^TM^{-1}B_{pu}$ term is also similar to grad-div stabilization and
can be added to discretizations of incompressible flows to improve mass
conservation in the case that $u \in \Ker\, B_{pu}$ does not
exactly imply $\nabla \cdot u_h = 0$. For example, it was shown in
\cite{Linke:2011} that the solution to the grad-div stabilized
Taylor--Hood discretization of the Navier--Stokes equations converges to
the Scott--Vogelius solution as $\gamma \to \infty$. Here we note that
$\bar{A}_{\gamma} \to \bar{A}^d$ when $\gamma \to \infty$. We now show,
for finite $\gamma$, that $\bar{A}_{\gamma}$ is equivalent to an
$H^1$-operator.

\begin{lemma}
  \label{lem:Agamma-H1equiv}
  Let $\bar{A}_{\gamma}$ be as defined in \cref{eq:def_Ag} and let
  $0 < \gamma < \infty$. Then $\bar{A}_{\gamma}$ is equivalent to an
  $H^1$-operator.
\end{lemma}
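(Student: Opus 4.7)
My plan is to prove that $\bar{A}_\gamma$ is spectrally equivalent, uniformly in $h$ (with constants that depend on $\gamma$), to the trace operator $\bar{A} = A_{\bar{u}\bar{u}} - A_{\bar{u}u} A_{uu}^{-1} A_{\bar{u}u}^T$ introduced in \cref{lem:AbarSPD}. Since $\bar{A}$ is spectrally equivalent to the $H^1$-like norm $\tnorm{\cdot}_h^2$ on $\bar{V}_h$ by \cite[Lemma~5]{Rhebergen:2018b}, this will yield the desired $H^1$-equivalence for $\bar{A}_\gamma$.

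The lower bound $\bar{A}_\gamma \ge \bar{A}$ (in the L\"owner sense) is essentially free: because $\gamma B_{pu}^T M^{-1} B_{pu}$ is symmetric positive semidefinite, $\widehat{A}_\gamma \ge A_{uu}$, so $\widehat{A}_\gamma^{-1} \le A_{uu}^{-1}$, and hence $A_{\bar{u}u}\widehat{A}_\gamma^{-1} A_{\bar{u}u}^T \le A_{\bar{u}u} A_{uu}^{-1} A_{\bar{u}u}^T$. Subtracting both sides from $A_{\bar{u}\bar{u}}$ reverses the sign and gives the claim, so one may take $c_1 = 1$.

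For the upper bound I would exploit the variational characterization of the Schur complement,
\begin{equation*}
  \langle \bar{A}_\gamma \bar{x}, \bar{x} \rangle = \min_{y} \bigl( y^T \widehat{A}_\gamma y + 2 \bar{x}^T A_{\bar{u}u} y + \bar{x}^T A_{\bar{u}\bar{u}} \bar{x} \bigr),
\end{equation*}
and test with the specific vector $y^\star = -A_{uu}^{-1} A_{\bar{u}u}^T \bar{x}$, i.e.~the algebraic harmonic extension of $\bar{x}$ that attains the minimum of the analogous unaugmented form (whose value is $\langle \bar{A} \bar{x}, \bar{x} \rangle$). This immediately yields
\begin{equation*}
  \langle \bar{A}_\gamma \bar{x}, \bar{x} \rangle \le \langle \bar{A} \bar{x}, \bar{x} \rangle + \gamma \, (y^\star)^T B_{pu}^T M^{-1} B_{pu}\, y^\star.
\end{equation*}
The hard part is controlling the extra $\gamma$-term by $\langle \bar{A} \bar{x}, \bar{x} \rangle$ uniformly in $h$. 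I would use that divergence maps $V_h = [P_k]^d$ into $Q_h = P_{k-1}$ exactly, so $(y^\star)^T B_{pu}^T M^{-1} B_{pu}\, y^\star = \norm{\nabla\cdot v_h^\star}_\Omega^2 \le d\sum_K \norm{\nabla v_h^\star}_K^2$, where $v_h^\star \in V_h$ has coefficient vector $y^\star$. The broken gradient seminorm is in turn bounded by $\tnorm{(v_h^\star,\bar{x}_h)}_v^2$, which by the coercivity~\cref{eq:stab_bound_ah} of $a_h$ is bounded by $(c_a^s)^{-1} a_h((v_h^\star,\bar{x}_h),(v_h^\star,\bar{x}_h)) = (c_a^s)^{-1}\langle \bar{A}\bar{x},\bar{x}\rangle$, the final equality holding because $y^\star$ together with $\bar{x}$ realizes the minimum defining $\bar{A}$. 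Chaining these estimates gives $\bar{A}_\gamma \le (1 + \gamma d/c_a^s)\,\bar{A}$ and closes the argument.
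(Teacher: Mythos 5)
Your argument is correct, but it takes a genuinely different route from the paper. The paper proves the lemma by re-interpreting $\bar{A}_{\gamma}$ at the PDE level: it identifies $\bar{A}_{\gamma}$ as the statically condensed matrix of the HDG discretization of the grad-div--augmented vector Poisson problem $-\nabla^2 u - \gamma\nabla(\nabla\cdot u)=f$, introduces the ($\gamma$-dependent) local lifting $l(\bar{u}_h)$ solving the element-wise problems, writes $\bar{A}_\gamma$ as the matrix of the trace bilinear form $\bar{a}_h(\bar{u}_h,\bar{w}_h)=a_h((l(\bar{u}_h),\bar{u}_h),(l(\bar{w}_h),\bar{w}_h))+\gamma(\nabla\cdot l(\bar{u}_h),\nabla\cdot l(\bar{w}_h))$, and then repeats the steps of \cite[Lemma~5]{Rhebergen:2018b} to obtain $C_1\tnorm{\bar{w}_h}_h^2\le\bar{a}_h(\bar{w}_h,\bar{w}_h)\le C_2(1+\gamma)\tnorm{\bar{w}_h}_h^2$. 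You instead stay entirely at the linear-algebra level and sandwich $\bar{A}\le\bar{A}_\gamma\le(1+\gamma d/c_a^s)\,\bar{A}$: the lower bound by operator antimonotonicity of inversion and congruence, the upper bound by testing the Schur-complement minimization with the unaugmented harmonic extension $y^\star=-A_{uu}^{-1}A_{\bar{u}u}^T\bar{x}$ (a suboptimal trial vector, which is exactly what an upper bound needs), with the only analytic inputs being $\abs{\nabla\cdot v}^2\le d\abs{\nabla v}^2$, the identification of $B_{pu}^TM^{-1}B_{pu}$ with the grad-div form (valid since $\nabla\cdot[P_k(K)]^d\subset P_{k-1}(K)$ on simplices, as the paper also uses), and the coercivity \cref{eq:stab_bound_ah}; you then invoke the known equivalence of $\bar{A}$ with $\tnorm{\cdot}_h^2$. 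Both arguments yield upper constants growing linearly in $\gamma$, consistent with the paper's remark that the equivalence weakens as $\gamma\to\infty$. Your route is shorter and avoids re-deriving the lifting machinery, and it delivers the slightly stronger and more directly usable statement that $\bar{A}_\gamma$ is spectrally equivalent to $\bar{A}$ itself with explicit constants $1$ and $1+\gamma d/c_a^s$; the paper's route buys the structural interpretation of $\bar{A}_\gamma$ as a condensed augmented vector Laplacian, which is what motivates applying AMG to it in the first place.
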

\begin{proof}
  Consider the problem: find $\boldsymbol{u}_h \in \boldsymbol{V}_h$
  such that
  \begin{equation}
    \label{eq:hdgvectorPoisson}
    a_h(\boldsymbol{u}_h, \boldsymbol{v}_h)
    + \gamma (\nabla \cdot u_h, \nabla \cdot v_h)_{\mathcal{T}}
    =
    (v_h, f)_{\mathcal{T}} \qquad \forall \boldsymbol{v}_h \in \boldsymbol{V}_h.
  \end{equation}
  Since $\gamma B_{pu}^TM^{-1}B_{pu}$ is the matrix representation of
  $\gamma (\nabla \cdot u_h, \nabla \cdot v_h)_{\mathcal{T}}$ (see
  \cite{Farrell:2021}), the matrix form of \cref{eq:hdgvectorPoisson}
  is given by
  \begin{equation}
    \label{eq:matrixformVectorPoisson}
    \begin{bmatrix}
      A_{uu} + \gamma B_{pu}^TM^{-1}B_{pu} & A_{\bar{u}u}^T \\
      A_{\bar{u}u} & A_{\bar{u}\bar{u}}
    \end{bmatrix}
    \begin{bmatrix}
      u \\ \bar{u}
    \end{bmatrix}
    =
    \begin{bmatrix}
      L_u \\ 0
    \end{bmatrix}.
  \end{equation}
  Eliminating $u$ from \cref{eq:matrixformVectorPoisson} we find the
  following problem for $\bar{u}$:
  \begin{equation}
    \label{eq:matreducedVecPoisson}
    \bar{A}_{\gamma}\bar{u}
    = \bar{F}_{\gamma},
  \end{equation}
  where
  $\bar{F}_{\gamma} = -A_{\bar{u}u}(A_{uu} + \gamma
  B_{pu}^TM^{-1}B_{pu})^{-1}F$.

  Let us now eliminate $u_h$ from \cref{eq:hdgvectorPoisson}. For
  this, let $s \in \sbr[0]{L^2(\Omega)}^d$ and
  $\bar{m}_h \in \bar{V}_h$ be given and define on an element $K$
  \begin{multline}
    a_K(v_h, w_h) :=
    \del{\nabla v_h, \nabla w_h}_{K}
    + \gamma(\nabla \cdot v_h, \nabla \cdot w_h)_{K}
    \\
    - \left\langle \partial_n v_h, w_h \right\rangle_{\partial K}
    - \left\langle v_h, \partial_n w_h \right\rangle_{\partial K}
    + \langle \alpha h^{-1}w_h, v_h \rangle_{\partial K},
  \end{multline}
  and
  \begin{equation}
    L_K(w_h) :=
    (s, w_h)_{K}
    - \left\langle \partial_n w_h, \bar{m}_h \right\rangle_{\partial K}
    + \langle \alpha h^{-1}w_h, \bar{m}_h \rangle_{\partial K}.
  \end{equation}
  Let $v_h^L(\bar{m}_h, s) \in V_h$ be the function such that its
  restriction to element $K$ satisfies the following local problem:
  given $s \in \sbr[0]{L^2(\Omega)}^d$ and $\bar{m}_h \in \bar{V}_h$,
  \begin{equation}
    \label{eq:localproblem}
    a_K(v_h^L, w_h) = L_K(w_h) \quad \forall w_h \in \sbr{P_k(K)}^d.
  \end{equation}
  Suppose now that $\boldsymbol{u}_h \in \boldsymbol{V}_h$ satisfies
  \cref{eq:hdgvectorPoisson} and that $f \in
  \sbr[0]{L^2(\Omega)}^d$. Define
  $l(\bar{u}_h) := v_h^L(\bar{u}_h, 0)$ and $u_h^f := v_h^L(0,
  f)$. Then $u_h = u_h^f + l(\bar{u}_h)$ where
  $\bar{u}_h \in \bar{V}_h$ satisfies
  \begin{equation}
    \label{eq:weakformredVecPoisson}
    a_h((l(\bar{u}_h), \bar{u}_h), (l(\bar{w}_h), \bar{w}_h))
    + \gamma(\nabla \cdot l(\bar{u}_h), \nabla \cdot l(\bar{w}_h))
    = (f, l(\bar{w}_h))_{\mathcal{T}} \quad \forall \bar{w}_h \in \bar{V}_h.
  \end{equation}
  The steps to show that \cref{eq:weakformredVecPoisson} holds are
  identical to the steps in the proof of
  \cite[Lemma~4]{Rhebergen:2018b}, so are omitted here. We next remark
  that \cref{eq:matreducedVecPoisson} is the matrix formulation of
  \cref{eq:weakformredVecPoisson}. Let us define
  \begin{equation}
    \bar{a}_h(\bar{u}_h, \bar{w}_h)
    :=
    a_h((l(\bar{u}_h), \bar{u}_h), (l(\bar{w}_h), \bar{w}_h))
    + \gamma(\nabla \cdot l(\bar{u}_h), \nabla \cdot l(\bar{w}_h)),
  \end{equation}
  then $\bar{A}_{\gamma}$ is the matrix obtained from the
  discretization of $\bar{a}_h(\bar{u}_h, \bar{w}_h)$. To conclude
  this proof we need to show that $\bar{a}_h(\cdot, \cdot)$ is
  equivalent to $\tnorm{\cdot}_h^2$.

  Since
  $\sum_{K \in \mathcal{T}}\norm{\nabla \cdot v_h}_{K} \le
  \tnorm{\boldsymbol{v}_h}_{v}$ it is clear from
  \cref{eq:stab_bound_ah} that
  \begin{equation}
    c_a^s\tnorm{\boldsymbol{v}_h}_v^2
    \le
    a_h(\boldsymbol{v}_h, \boldsymbol{v}_h)
    + \gamma \sum_{K \in \mathcal{T}} \norm{\nabla \cdot v_h}_{K}
    \le
    (c_a^b + \gamma) \tnorm{\boldsymbol{v}_h}_{v}.
  \end{equation}
  Following now identical steps as in the proof of \cite[Lemma
  5]{Rhebergen:2018b} (we omit these steps), there exist positive
  constants $C_1$ and $C_2$ independent of $h_K$ such that
  \begin{equation}
    \label{eq:H1equiv}
    C_1 \tnorm{\bar{w}_h}^2_h
    \le
    \bar{a}_h(\bar{w}_h, \bar{w}_h)
    \le
    C_2(1 + \gamma) \tnorm{\bar{w}_h}^2_h,
  \end{equation}
  so that the result follows.
\end{proof}

\Cref{lem:Agamma-H1equiv} shows that $\bar{A}_{\gamma}$ is equivalent to
an $H^1$-operator for finite values of $\gamma$, hence, standard
algebraic multigrid is expected to be effective on $\bar{A}_{\gamma}$.
\Cref{lem:Agamma-H1equiv} also shows that the larger we choose $\gamma$,
i.e., the better $\bar{A}_{\gamma}$ approximates $\bar{A}^d$, the weaker
the equivalence between $\bar{A}_{\gamma}$ and the $H^1$-operator. We
therefore expect multigrid to be less effective for large values of
$\gamma$. In our numerical simulations we therefore choose small values
for this parameter.

\section{Numerical examples}
\label{sec:num_examples}

We now examine the performance of MINRES combined with the
preconditioners $\mathbb{P}_{\bar{M}}$ and $\mathbb{P}_{BAB}$ introduced
in \cref{thm:P2x2precon}. All examples in this section have been
implemented in MFEM~\cite{mfem-library} and we use the PETSc
\cite{petsc-user-ref, petsc-web-page} implementation of MINRES.

In the implementation of the application of $\mathbb{P}_{\bar{M}}$ and
$\mathbb{P}_{BAB}$, unless specified differently, we use a direct
solver to compute $\bar{M}^{-1}$ and
$(B_{\bar{p}u}A_{uu}^{-1}B_{\bar{p}u}^T)^{-1}$. Furthermore, we
consider different choices for $(\bar{R}^d)^{-1}$ as will be discussed
below. When we apply multigrid, we use classical algebraic multigrid
(four multigrid V-cycles) with one application of a symmetric
Gauss--Seidel smoother (pre and post) via the BoomerAMG
library~\cite{Henson:2002}. In all examples the MINRES iterations are
terminated once the relative true residual reaches a tolerance of
$10^{-8}$. We consider unstructured simplicial meshes in two and three
dimensions.

Let $k$ be the polynomial degree in our function space
$\boldsymbol{X}_h$.  In two dimensions we set the penalty parameter
$\alpha$ to $\alpha=4k^2$ while it is set to $\alpha=6k^2$ in three
dimensions. We compare the performance of $\mathbb{P}_{\bar{M}}$ and
$\mathbb{P}_{BAB}$ also to the performance of
$\mathbb{P}_{3\times 3}$, a preconditioner we presented previously in
\cite{Rhebergen:2018b}. Where $\mathbb{P}_{\bar{M}}$ and
$\mathbb{P}_{BAB}$ are preconditioners for the two-field reduced
system \cref{eq:condensed2form_up} in which both $u$ and $p$ have been
eliminated from \cref{eq:fullmatrix}, $\mathbb{P}_{3\times 3}$ is a
preconditioner for the three-field reduced system
\cref{eq:condensed3form} in which only $u$ has been eliminated from
\cref{eq:fullmatrix}. For convenience, we summarize the $3 \times 3$
preconditioner in~\cref{ss:elim_vel}.

\subsection{Optimality assessment}
\label{ss:optimality}

We examine the number of required MINRES iterations with mesh
refinement for the two-dimensional lid-driven cavity problem for
polynomial degrees $k=2$, $k=3$ and $k=4$. In particular, we consider
the square domain $\Omega := [-1, 1]^2$ and impose the Dirichlet
boundary condition $u = (1-x_1^4, 0)$ on the boundary with $x_2 = 1$,
and $u=0$ on the remaining boundaries.

We first compare the performance of $\mathbb{P}_{\bar{M}}$ and
$\mathbb{P}_{BAB}$ when making the following choices for
$(\bar{R}^d)^{-1}$:
\begin{equation}
  \label{eq:Rd_LU}
  (\bar{R}^d)^{-1} = (\bar{A}^d)^{-1}
  \quad\text{and}\quad
  (\bar{R}^d)^{-1} = (\bar{A}_{\gamma})^{-1},
\end{equation}
and with $\gamma = 0$ and $\gamma = 0.1$. The number of iterations
required for MINRES to reach convergence is listed in
\cref{tab:preconditioning_23d_op_LU}. We draw the following conclusions
from this table:
\begin{itemize}
\item Choosing $(\bar{R}^d)^{-1} = (\bar{A}^d)^{-1}$ we are guaranteed
  that $(\bar{R}^d)^{-1}$ is spectrally equivalent to
  $(\bar{A}^d)^{-1}$. With this choice we observe that the iteration
  count for MINRES to converge to a given tolerance is independent of
  $h$. This verifies \cref{thm:P2x2precon} that $\mathbb{P}_{\bar{M}}$
  and $\mathbb{P}_{BAB}$ are optimal preconditioners.
\item We observe that choosing
  $(\bar{R}^d)^{-1}=(\bar{A}_{\gamma})^{-1}$, with $\gamma=0$ and
  $\gamma = 0.1$, result in optimal preconditioners
  $\mathbb{P}_{\bar{M}}$ and $\mathbb{P}_{BAB}$. This verifies that
  $\bar{A}_{\gamma}$ for small $\gamma$ is a good approximation to
  $\bar{A}^d$ (as discussed in \cref{ss:char_barRd}).
\item We observe that as the polynomial degree $k$ increases, there is
  little variation in iteration count when choosing
  $(\bar{R}^d)^{-1} = (\bar{A}^d)^{-1}$, however, there is a slight
  increase in iteration count when choosing
  $(\bar{R}^d)^{-1}=(\bar{A}_{\gamma})^{-1}$. Furthermore, when using
  $\bar{A}_{\gamma}$ to approximate $\bar{A}^d$, the iteration count
  using $\mathbb{P}_{BAB}$ as preconditioner is less than when using
  $\mathbb{P}_{\bar{M}}$ as preconditioner when $k=3$ and $k=4$.
\end{itemize}

A direct solver for inverting the blocks in \cref{eq:Rd_LU} is
prohibitively expensive in large simulations. We therefore next compare
the performance of $\mathbb{P}_{\bar{M}}$ and $\mathbb{P}_{BAB}$ when
replacing the direct solves by multigrid, i.e., making the following
choices for~$(\bar{R}^d)^{-1}$:
\begin{equation}
  \label{eq:Rd_MG}
  (\bar{R}^d)^{-1} = (\bar{A}^d)^{MG}
  \quad\text{and}\quad
  (\bar{R}^d)^{-1} = (\bar{A}_{\gamma})^{MG}.
\end{equation}
Again we consider $\gamma = 0$ and $\gamma = 0.1$. The number of
iterations required for MINRES to reach convergence are listed in
\cref{tab:preconditioning_23d_op_MG}. We draw the following
conclusions from this table:
\begin{itemize}
\item From \cref{tab:preconditioning_23d_op_LU} we observe that
  $(\bar{R}^d)^{-1} = (\bar{A}^d)^{-1}$ is the best choice, resulting in
  a solver that converges in three times fewer iterations than the next
  best choice. However, computing $(\bar{A}^d)^{-1}$ is costly. If we
  replace $(\bar{A}^d)^{-1}$ by $(\bar{A}^d)^{MG}$ we observe in
  \cref{tab:preconditioning_23d_op_MG} that iteration count to
  convergence grows as $h$ decreases, both for $\mathbb{P}_{\bar{M}}$
  and for $\mathbb{P}_{BAB}$. This is as expected from
  \cref{lem:properties-barAd}, in particular \cref{eq:propertyAd_2},
  which shows that $\bar{A}^d$ may not be an $H^1$-like operator. We
  therefore cannot expect standard multigrid to perform well on
  $\bar{A}^d$.
\item Since $\bar{A}_{\gamma} \to \bar{A}$ as $\gamma \to 0$, we expect
  multigrid to be effective on $\bar{A}_{\gamma}$ when $\gamma$ is
  small. This is because $\bar{A}$ is an $H^1$-like operator on which
  multigrid is expected to perform well (see \cref{ss:char_barRd}). This
  is verified here by the observation that the choice $(\bar{R}^d)^{-1}
  = (\bar{A}_{\gamma})^{MG}$, with $\gamma = 0$ and $\gamma = 0.1$,
  results in an optimal preconditioner for both $\mathbb{P}_{\bar{M}}$
  and $\mathbb{P}_{BAB}$.
\item As we saw also in \cref{tab:preconditioning_23d_op_LU}, when
  using $\bar{A}_{\gamma}$ to approximate $\bar{A}^d$, there is a
  slight increase in iteration count as $k$ increases. Finally, we
  observe that when $k=3$ and $k=4$, the iteration count using
  $\mathbb{P}_{BAB}$ as preconditioner is less than when using
  $\mathbb{P}_{\bar{M}}$ as preconditioner.
\end{itemize}

\begin{table}
  \caption{Iteration counts for preconditioned MINRES for the relative
    true residual to reach a tolerance of $10^{-8}$ for the lid-driven
    cavity problem for different polynomial degrees. We compare
    different choices for $(\bar{R}^d)^{-1}$, see \cref{eq:Rd_LU}, in
    $\mathbb{P}_{\bar{M}}$ and $\mathbb{P}_{BAB}$. The test case is
    described in \cref{ss:optimality}.} {\small
    \begin{center}
      \begin{tabular}{c|c||c|c|c||c|c|c}
        \multicolumn{8}{c}{$k=2$}\\ 
        \hline
        \multicolumn{2}{c||}{}
        & \multicolumn{3}{c||}{$\mathbb{P}_{\bar{M}}$}
        & \multicolumn{3}{c}{$\mathbb{P}_{BAB}$} \\
        \hline
        Elements & DOFs & $(\bar{A}^d)^{-1}$ & $(\bar{A}_{0})^{-1}$ & $(\bar{A}_{0.1})^{-1}$ & $(\bar{A}^d)^{-1}$ & $(\bar{A}_{0})^{-1}$ & $(\bar{A}_{0.1})^{-1}$ \\
        \hline
        176         & \num{2574}   & 31 & 90 & 84 & 54 & 87 & 82 \\
        704         & \num{9900}   & 31 & 94 & 87 & 55 & 92 & 86 \\
        \num{2816}  & \num{38808}  & 31 & 96 & 89 & 54 & 95 & 88 \\
        \num{11264} & \num{153648} & 29 & 93 & 86 & 54 & 95 & 87 \\
        \num{45056} & \num{611424} & 29 & 93 & 86 & 54 & 95 & 87 \\
        \hline
        \multicolumn{8}{c}{$k=3$}\\ 
        \hline
        \multicolumn{2}{c||}{}
        & \multicolumn{3}{c||}{$\mathbb{P}_{\bar{M}}$}
        & \multicolumn{3}{c}{$\mathbb{P}_{BAB}$} \\
        \hline
        Elements & DOFs & $(\bar{A}^d)^{-1}$ & $(\bar{A}_{0})^{-1}$ & $(\bar{A}_{0.1})^{-1}$ & $(\bar{A}^d)^{-1}$ & $(\bar{A}_{0})^{-1}$ & $(\bar{A}_{0.1})^{-1}$ \\
        \hline
        176         & \num{3432}   & 26 & 119 & 114 & 50 & 106 & 102 \\
        704         & \num{13200}  & 26 & 114 & 109 & 49 & 107 & 103 \\
        \num{2816}  & \num{51744}  & 24 & 114 & 109 & 49 & 107 & 103 \\
        \num{11264} & \num{204864} & 24 & 114 & 109 & 47 & 107 & 103 \\
        \num{45056} & \num{815232} & 24 & 106 & 101 & 47 & 101 & 97  \\
        \hline
        \multicolumn{8}{c}{$k=4$}\\ 
        \hline
        \multicolumn{2}{c||}{}
        & \multicolumn{3}{c||}{$\mathbb{P}_{\bar{M}}$}
        & \multicolumn{3}{c}{$\mathbb{P}_{BAB}$} \\
        \hline
        Elements & DOFs & $(\bar{A}^d)^{-1}$ & $(\bar{A}_{0})^{-1}$ & $(\bar{A}_{0.1})^{-1}$ & $(\bar{A}^d)^{-1}$ & $(\bar{A}_{0})^{-1}$ & $(\bar{A}_{0.1})^{-1}$ \\
        \hline
        176         & \num{4290}    & 24 & 144 & 137 & 49 & 129 & 124 \\
        704         & \num{16500}   & 24 & 146 & 139 & 49 & 131 & 125 \\
        \num{2816}  & \num{64680}   & 22 & 137 & 140 & 49 & 131 & 126 \\
        \num{11264} & \num{256080}  & 22 & 137 & 130 & 49 & 128 & 123 \\
        \num{45056} & \num{1019040} & 20 & 137 & 130 & 47 & 125 & 120 \\
        \hline
      \end{tabular}
    \label{tab:preconditioning_23d_op_LU}
  \end{center}
}
\end{table}

\begin{table}
  \caption{Iteration counts for preconditioned MINRES for the relative
    true residual to reach a tolerance of $10^{-8}$ for the lid-driven
    cavity problem for different polynomial degrees. We compare
    different choices for $(\bar{R}^d)^{-1}$, see \cref{eq:Rd_MG}, in
    $\mathbb{P}_{\bar{M}}$ and $\mathbb{P}_{BAB}$. The test case is
    described in \cref{ss:optimality}.}
  {\fontsize{8.5}{10.5}\selectfont
    \begin{center}
      \begin{tabular}{c|c||c|c|c||c|c|c}
        \multicolumn{8}{c}{$k=2$}\\ 
        \hline
        \multicolumn{2}{c||}{}
        & \multicolumn{3}{c||}{$\mathbb{P}_{\bar{M}}$}
        & \multicolumn{3}{c}{$\mathbb{P}_{BAB}$} \\
        \hline
        Elements & DOFs & $(\bar{A}^d)^{MG}$ & $(\bar{A}_{0})^{MG}$ & $(\bar{A}_{0.1})^{MG}$ & $(\bar{A}^d)^{MG}$ & $(\bar{A}_{0})^{MG}$ & $(\bar{A}_{0.1})^{MG}$ \\
        \hline
        176         & \num{2574}   & 37  & 90 & 84 & 60   & 87  & 82 \\
        704         & \num{9900}   & 52  & 94 & 87 & 91   & 92  & 86 \\
        \num{2816}  & \num{38808}  & 91  & 96 & 85 & 163  & 94  & 88 \\
        \num{11264} & \num{153648} & 174 & 93 & 87 & 334  & 95  & 87 \\
        \num{45056} & \num{611424} & 358 & 95 & 88 & \textgreater 500 & 95 & 90 \\
        \hline
        \multicolumn{8}{c}{$k=3$}\\ 
        \hline
        \multicolumn{2}{c||}{}
        & \multicolumn{3}{c||}{$\mathbb{P}_{\bar{M}}$}
        & \multicolumn{3}{c}{$\mathbb{P}_{BAB}$} \\
        \hline
        Elements & DOFs & $(\bar{A}^d)^{MG}$ & $(\bar{A}_{0})^{MG}$ & $(\bar{A}_{0.1})^{MG}$ & $(\bar{A}^d)^{MG}$ & $(\bar{A}_{0})^{MG}$ & $(\bar{A}_{0.1})^{MG}$ \\
        \hline
        176         & \num{3432}   & 38  & 119 & 114 & 65  & 106 & 102 \\
        704         & \num{13200}  & 59  & 114 & 109 & 101 & 107 & 103 \\
        \num{2816}  & \num{51744}  & 101 & 115 & 110 & 193 & 107 & 103 \\
        \num{11264} & \num{204864} & 190 & 116 & 110 & 388 & 107 & 103 \\
        \num{45056} & \num{815232} & 374 & 110 & 105 & \textgreater 500 & 105 & 101 \\
        \hline
        \multicolumn{8}{c}{$k=4$}\\ 
        \hline
        \multicolumn{2}{c||}{}
        & \multicolumn{3}{c||}{$\mathbb{P}_{\bar{M}}$}
        & \multicolumn{3}{c}{$\mathbb{P}_{BAB}$} \\
        \hline
        Elements & DOFs & $(\bar{A}^d)^{MG}$ & $(\bar{A}_{0})^{MG}$ & $(\bar{A}_{0.1})^{MG}$ & $(\bar{A}^d)^{MG}$ & $(\bar{A}_{0})^{MG}$ & $(\bar{A}_{0.1})^{MG}$ \\
        \hline
        176         & \num{4290}    & 45  & 145 & 138 & 76  & 129 & 124 \\
        704         & \num{16500}   & 68  & 147 & 140 & 127 & 131 & 125 \\
        \num{2816}  & \num{64680}   & 121 & 138 & 132 & 258 & 131 & 126 \\
        \num{11264} & \num{256080}  & 230 & 139 & 133 & \textgreater 500 & 129 & 124 \\
        \num{45056} & \num{1019040} & 477 & 142 & 135 & \textgreater 500 & 130 & 124 \\
        \hline
      \end{tabular}
    \label{tab:preconditioning_23d_op_MG}
  \end{center}
}
\end{table}

\subsection{Comparison HDG, EDG, and EDG-HDG}
\label{ss:EDG_EDGHDG}

The analysis of the preconditioner in \cref{thm:P2x2precon} holds also
for the embedded discontinuous Galerkin (EDG) and embedded-hybridized
discontinuous Galerkin (EDG-HDG) discretizations of the Stokes problem
\cite{Rhebergen:2020}. The EDG-HDG discretization is given by
replacing $\bar{V}_h$ by the continuous trace velocity space
$\bar{V}_h \cap C^0(\Gamma_0)$ in \cref{eq:discrete_problem}. In the
EDG method both the trace velocity and trace pressure functions are
continuous; in \cref{eq:discrete_problem} $\bar{V}_h$ and $\bar{Q}_h$
are replaced by, respectively, $\bar{V}_h \cap C^0(\Gamma_0)$ and
$\bar{Q}_h \cap C^0(\Gamma_0)$. Both the EDG and the EDG-HDG
discretizations result in velocity approximations that are exactly
divergence-free on each cell.

We will compare results of the preconditioners $\mathbb{P}_{\bar{M}}$
and $\mathbb{P}_{BAB}$, in which we choose
$(\bar{R}^d)^{-1} = (\bar{A}_{\gamma})^{MG}$, with $\gamma = 0$ and
$\gamma = 0.1$, to results obtained when using the preconditioner
$\mathbb{P}_{3\times 3}$ from~\cite{Rhebergen:2018b}. We will apply
these two preconditioners to HDG, EDG, and EDG-HDG discretizations of
the Stokes problem. We consider the two- and three-dimensional
lid-driven cavity problems. The two-dimensional problem is described
\cref{ss:optimality}. In three dimensions we consider the cube
$\Omega := [0, 1]^3$ and we impose
$u = (1 - \tau_1^4, (1 - \tau_2^4)/10, 0)$ with $\tau_i = 2x_i - 1$ on
the boundary with $x_3 = 1$ and $u = 0$ on the remaining
boundaries. We consider only the case $k=2$.

The number of iterations for MINRES to reach convergence are presented
for the two dimensional problem in \cref{tab:preconditioning_2d} and for
the three dimensional problem in \cref{tab:preconditioning_3d}. From
both tables it is clear that both $\mathbb{P}_{\bar{M}}$ and
$\mathbb{P}_{BAB}$ are optimal preconditioners for HDG, EDG and EDG-HDG
discretizations, i.e.~the iteration count does not grow systematically
with increasing problem size. We also observe that 20-60\% fewer
iterations are required to solve the linear system using the two-field
preconditioners $\mathbb{P}_{\bar{M}}$ and $\mathbb{P}_{BAB}$ compared
to using the three-field preconditioner $\mathbb{P}_{3\times 3}$
from~\cite{Rhebergen:2018b}. Finally, we note that for all calculations,
fewer iterations are required when using $\gamma = 0.1$ than when
using~$\gamma = 0$.

\begin{table}
  \caption{Iteration counts for preconditioned MINRES for the relative
    true residual to reach a tolerance of $10^{-8}$ for the lid-driven
    cavity problem in two dimensions. A comparison between using
    $\mathbb{P}_{\bar{M}}$ and $\mathbb{P}_{BAB}$ with
    $(\bar{R}^d)^{-1} = (\bar{A}_{\gamma})^{MG}$,
    with $\gamma = 0$ and $\gamma = 0.1$, and $\mathbb{P}_{3\times 3}$
    for HDG, EDG, and EDG-HDG discretizations of the Stokes problem.
    The test case is described in \cref{ss:EDG_EDGHDG}. The iteration
    count when using $\gamma = 0.1$ is given in brackets.} {\small
    \begin{center}
      \begin{tabular}{c|cc|cc|cc}
        \multicolumn{1}{c}{} & \multicolumn{6}{c}{$\mathbb{P}_{\bar{M}}$} \\
        \hline
        & \multicolumn{2}{c|}{HDG} & \multicolumn{2}{c|}{EDG} & \multicolumn{2}{c}{EDG-HDG} \\
        Elements & DOFs & Its & DOFs & Its & DOFs & Its \\
        \hline
        176         & \num{2574}   & 90 (84) & \num{1191}   & 86 (81) & \num{1652}   & 95 (86) \\
        704         & \num{9900}   & 94 (87) & \num{4491}   & 83 (79) & \num{6294}   & 92 (88) \\
        \num{2816}  & \num{38808}  & 96 (85) & \num{17427}  & 85 (80) & \num{24553}  & 94 (85) \\
        \num{11264} & \num{153648} & 93 (87) & \num{68643}  & 82 (77) & \num{96978}  & 92 (86) \\
        \num{45056} & \num{611424} & 95 (88) & \num{272451} & 83 (78) & \num{385442} & 88 (83) \\
        \hline
        \multicolumn{1}{c}{} & \multicolumn{6}{c}{$\mathbb{P}_{BAB}$} \\
        \hline
        & \multicolumn{2}{c|}{HDG} & \multicolumn{2}{c|}{EDG} & \multicolumn{2}{c}{EDG-HDG} \\
        Elements & DOFs & Its & DOFs & Its & DOFs & Its \\
        \hline
        176         & \num{2574}   & 87 (82) & \num{1191}   & 61 (56) & \num{1652}   & 61 (58) \\
        704         & \num{9900}   & 92 (86) & \num{4491}   & 62 (59) & \num{6294}   & 64 (59) \\
        \num{2816}  & \num{38808}  & 94 (88) & \num{17427}  & 63 (60) & \num{24553}  & 63 (60) \\
        \num{11264} & \num{153648} & 95 (87) & \num{68643}  & 63 (58) & \num{96978}  & 63 (60) \\
        \num{45056} & \num{611424} & 95 (90) & \num{272451} & 61 (58) & \num{385442} & 63 (60) \\
        \hline
        \multicolumn{1}{c}{} & \multicolumn{6}{c}{$\mathbb{P}_{3\times 3}$} \\
        \hline
        & \multicolumn{2}{c|}{HDG} & \multicolumn{2}{c|}{EDG} & \multicolumn{2}{c}{EDG-HDG} \\
        Elements & DOFs & Its & DOFs & Its & DOFs & Its \\
        \hline
        176         & \num{3102}   & 130 & \num{1719}   & 124 & \num{2180}   & 131 \\
        704         & \num{12012}  & 134 & \num{6603}   & 127 & \num{8406}   & 136 \\
        \num{2816}  & \num{47256}  & 136 & \num{25875}  & 124 & \num{33002}  & 131 \\
        \num{11264} & \num{187440} & 138 & \num{102435} & 125 & \num{130770} & 135 \\
        \num{45056} & \num{746592} & 132 & \num{407619} & 118 & \num{520610} & 127 \\
        \hline
      \end{tabular}
    \label{tab:preconditioning_2d}
  \end{center}
}
\end{table}

\begin{table}
  \caption{Iteration counts for preconditioned MINRES for the relative
    true residual to reach a tolerance of $10^{-8}$ for the lid-driven
    cavity problem in three dimensions. A comparison between using
    $\mathbb{P}_{\bar{M}}$ and $\mathbb{P}_{BAB}$ with
    $(\bar{R}^d)^{-1} = (\bar{A}_{\gamma})^{MG}$,
    with $\gamma = 0$ and $\gamma = 0.1$, and $\mathbb{P}_{3\times 3}$
    for HDG, EDG, and EDG-HDG discretizations of the Stokes problem.
    The test case is described in \cref{ss:EDG_EDGHDG}. The iteration
    count when using $\gamma = 0.1$ is given in brackets.} {\small
    \begin{center}
      \begin{tabular}{c|cc|cc|cc}
        \multicolumn{1}{c}{} & \multicolumn{6}{c}{$\mathbb{P}_{\bar{M}}$} \\
        \hline
        & \multicolumn{2}{c|}{HDG} & \multicolumn{2}{c|}{EDG} & \multicolumn{2}{c}{EDG-HDG} \\
        Elements & DOFs & Its & DOFs & Its & DOFs & Its \\
        \hline
        524         & \num{28032}    & 172 (160) & \num{3884}   & 99 (94) & \num{9921}   & 150 (143) \\
        \num{4192}  & \num{212736}   & 161 (149) & \num{26452}  & 91 (86) & \num{73023}  & 133 (121) \\
        \num{33536} & \num{1655808}  & 151 (140) & \num{194724} & 80 (75) & \num{559995} & 117 (94) \\
        \hline
        \multicolumn{1}{c}{} & \multicolumn{6}{c}{$\mathbb{P}_{BAB}$} \\
        \hline
        & \multicolumn{2}{c|}{HDG} & \multicolumn{2}{c|}{EDG} & \multicolumn{2}{c}{EDG-HDG} \\
        Elements & DOFs & Its & DOFs & Its & DOFs & Its \\
        \hline
        524         & \num{28032}    & 139 (129) & \num{3884}   & 62 (60) & \num{9921}   & 70 (67) \\
        \num{4192}  & \num{212736}   & 142 (132) & \num{26452}  & 61 (58) & \num{73023}  & 66 (63) \\
        \num{33536} & \num{1655808}  & 139 (129) & \num{194724} & 57 (55) & \num{559995} & 60 (58) \\
        \hline
        \multicolumn{1}{c}{} & \multicolumn{6}{c}{$\mathbb{P}_{3\times 3}$} \\
        \hline
        & \multicolumn{2}{c|}{HDG} & \multicolumn{2}{c|}{EDG} & \multicolumn{2}{c}{EDG-HDG} \\
        Elements & DOFs & Its & DOFs & Its & DOFs & Its \\
        \hline
        524         & \num{30128}    & 224 & \num{5980}   & 151 & \num{12017}  & 223 \\
        \num{4192}  & \num{229504}   & 218 & \num{43220}  & 146 & \num{89791}  & 206 \\
        \num{33536} & \num{1789952}  & 209 & \num{328868} & 134 & \num{694139} & 176 \\
        \hline
      \end{tabular}
    \label{tab:preconditioning_3d}
  \end{center}
}
\end{table}

\subsection{The effect of viscosity}
\label{ss:viscosity}

Consider the following modification of the Stokes problem
\cref{eq:stokes}:
\begin{subequations}
  \label{eq:stokes_nu}
  \begin{align}
    -\nu \nabla^2u + \nabla p &= f & & \text{in } \Omega,
    \\
    \nabla\cdot u &= 0 & & \text{in } \Omega,
    \\
    u &= u_d & & \text{on } \partial\Omega,
  \end{align}
\end{subequations}
where $\nu > 0$ denotes a constant viscosity and $u_d$ a given Dirichlet boundary condition. The discrete formulation
for this problem is given by:
\begin{equation}
  \label{eq:discrete_problem_nu}
  \nu a_h(\boldsymbol{u}_h, \boldsymbol{v}_h)
  + b_h(\boldsymbol{p}_h, v_h)
  + b_h(\boldsymbol{q}_h, u_h)
  = \del{v_h, f}_{\mathcal{T}}
  \quad \forall (\boldsymbol{v}_h, \boldsymbol{q}_h) \in \boldsymbol{X}_h,
\end{equation}
where $a_h(\cdot, \cdot)$ and $b_h(\cdot, \cdot)$ are defined in
\cref{eq:bilin_forms}. Redefining the matrix $A$ in \cref{eq:matrix_A}
as $A \leftarrow \nu A$, the block matrix form of
\cref{eq:discrete_problem_nu} is given by \cref{eq:fullmatrix} and its
statically condensed form is given by
\cref{eq:condensed2form_up}. Redefining furthermore the element and
trace pressure mass matrices in \cref{eq:def_cell_facet_mmatrix} as,
respectively, $M \leftarrow \nu^{-1}M$ and
$\bar{M} \leftarrow \nu^{-1}\bar{M}$, the preconditioners again take
on the forms presented in \cref{sec:preconditioning}.

In this section we consider the effect of the viscosity parameter on
solving the statically condensed problem. As test case we choose the
source term $f$ and the Dirichlet boundary condition in
\cref{eq:stokes_nu} such that the exact solution on the domain
$\Omega := [-1, 1]^2$ is given by
\begin{equation}
  u=
  \begin{bmatrix}
    \sin(\pi x_1)\sin(\pi x_2) \\
    \cos(\pi x_1)\cos(\pi x_2)
  \end{bmatrix},
  \
  p = \sin(\pi x_1)\cos(\pi x_2).
\end{equation}
In our simulations we choose polynomial degree $k=2$.

In \cref{tab:performance_nu} we present the number of iterations for
MINRES to reach convergence for $\nu = 1$ and $\nu = 10^{-6}$ for the
HDG, EDG, and EDG-HDG discretizations. We also present the $L^2$-norm
of the velocity error, pressure error, the cell-wise divergence error
as well as $\max_{q \in \mathcal{Q}}|\jump{u \cdot n}|$, where
$\mathcal{Q}$ is the set of all quadrature points on the facets.

The HDG and EDG-HDG discretizations result in a velocity field $u_h$
that is pointwise divergence-free on the cells and
$H(\text{div})$-conforming. A consequence of these properties is that
the magnitude of the viscosity does not affect the $L^2$-norm of the
velocity error \cite[Theorem 3]{Rhebergen:2020}. This
`pressure-robustness' is also observed in \cref{tab:performance_nu};
the $L^2$-norm of the error of the velocity for $\nu=1$ and
$\nu=10^{-6}$ are more or less identical. (Note that when viscosity
decreases, the conditioning of the matrix worsens, explaining the
increase in the error in the divergence of the velocity and the error
in the jump of the normal component of the velocity across facets with
decreasing viscosity. This, however, does not affect the
pressure-robustness of the discretization.)

The EDG method, on the other hand, results in a velocity field $u_h$
that is pointwise divergence-free on the cells, but not
$H(\text{div})$-conforming. As a consequence, the upper bound for the
$L^2$-norm of the error of the velocity is inversely proportional to
the viscosity (see also \cite[Remark 1]{Rhebergen:2020}). We indeed
observe this in \cref{tab:performance_nu};
$\norm{\nabla \cdot u_h}_{\Omega}$ is close to machine precision while
the error in $\max_{q \in \mathcal{Q}}|\jump{u \cdot n}|$ is
magnitudes larger. Furthermore, we observe an increase in the
$L^2$-norm of the velocity error when viscosity is decreased.

Finally, we observe for all discretizations that the viscosity has no
effect on the number of iterations for MINRES to reach convergence
when using the preconditioner $\mathbb{P}_{BAB}$. When using the
preconditioner $\mathbb{P}_{\bar{M}}$ there is a slight increase of
9-14\% in iteration count as the viscosity decreases from $\nu = 1$ to
$\nu = 10^{-6}$.

\begin{table}
  \caption{$L^2$-norm of the velocity error, pressure error, the
    cell-wise divergence error, the error in the jump of the normal
    component of the velocity across facets, and iteration counts for
    preconditioned MINRES for the relative preconditioned residual
    norm to reach a tolerance of $10^{-12}$ for the two dimensional
    test case described in \cref{ss:viscosity}. By
    $\mathbb{P}_{\bar{M}}^{\gamma=0}$ we mean that we solve
    \cref{eq:condensed2form_up} using MINRES with preconditioner
    $\mathbb{P}_{\bar{M}}$ in which we choose
    $(\bar{R}^d)^{-1} = (\bar{A}_{\gamma})^{MG}$, with $\gamma=0$. The
    other methods are described similarly. The numbers listed with an
    asterisk are the number of iterations required for the relative
    preconditioned residual norm to reach a tolerance of
    $10^{-11}$. The relative preconditioned residual stagnates shortly
    after unable to reach $10^{-12}$. This happens only for $\nu=1$ for
    the EDG and EDG-HDG discretizations on the coarsest mesh.}
    \centering {\fontsize{3.8}{10}\selectfont
  \begin{tabular}{cc|cccc|cc|cc}
    Elements & DOFs & $\norm{u-u_h}_{\Omega}$ & $\norm{p-p_h}_{\Omega}$ & $\norm{\nabla\cdot u_h}_{\Omega}$ & $|\jump{u_h\cdot n}|$ & $\mathbb{P}_{\bar{M}}^{\gamma=0}$ & $\mathbb{P}_{BAB}^{\gamma=0}$ & $\mathbb{P}_{\bar{M}}^{\gamma=0.1}$ & $\mathbb{P}_{BAB}^{\gamma=0.1}$ \\
    \hline
    \multicolumn{10}{c}{HDG, $\nu = 1$} \\
    \hline
    \num{11264}  & \num{153648}  & 6.1e-6 & 2.4e-3 & 1.1e-13 & 9.0e-14 & 157 & 148 & 146 & 140 \\
    \num{45056}  & \num{611424}  & 7.6e-7 & 5.9e-4 & 2.2e-13 & 9.4e-14 & 159 & 152 & 148 & 142 \\
    \num{180224} & \num{2439360} & 9.5e-8 & 1.5e-4 & 4.5e-13 & 5.5e-14 & 158 & 155 & 146 & 146 \\
    \hline
    \multicolumn{10}{c}{HDG, $\nu = 10^{-6}$} \\
    \hline
    \num{11264}  & \num{153648}  & 6.1e-6 & 4.4e-4 & 1.0e-10 & 2.3e-9  & 171 & 150 & 159 & 140 \\
    \num{45056}  & \num{611424}  & 7.8e-7 & 1.1e-4 & 1.0e-10 & 4.3e-10 & 173 & 154 & 161 & 145 \\
    \num{180224} & \num{2439360} & 1.4e-7 & 2.7e-5 & 1.1e-10 & 1.4e-9  & 172 & 157 & 162 & 148 \\
    \hline
    \multicolumn{10}{c}{EDG, $\nu = 1$} \\
    \hline
    \num{11264}  & \num{68643}   & 1.1e-5 & 5.2e-3 & 1.1e-13 & 3.0e-7 & $136^*$ & $91^*$ & $129^*$ & $87^*$ \\
    \num{45056}  & \num{272451}  & 1.4e-6 & 1.3e-3 & 2.2e-13 & 1.9e-8 & 155 & 101 & 146 & 96 \\
    \num{180224} & \num{1085571} & 1.7e-7 & 3.3e-4 & 4.5e-13 & 1.2e-9 & 156 & 102 & 147 & 97 \\
    \hline
    \multicolumn{10}{c}{EDG, $\nu = 10^{-6}$} \\
    \hline
    \num{11264}  & \num{68643}   & 7.6e-3 & 4.4e-4 & 1.0e-10 & 3.8e-4 & 170 & 101 & 160 & 97  \\
    \num{45056}  & \num{272451}  & 4.8e-4 & 1.1e-4 & 1.0e-10 & 1.2e-5 & 176 & 102 & 165 & 100 \\
    \num{180224} & \num{1085571} & 3.0e-5 & 2.7e-5 & 1.1e-10 & 3.7e-7 & 178 & 105 & 168 & 101 \\
    \hline
    \multicolumn{10}{c}{EDG-HDG, $\nu = 1$} \\
    \hline
    \num{11264}  & \num{96978}   & 1.1e-5 & 5.3e-3 & 1.1e-13 & 1.5e-12 & $145^*$ & $93^*$ & $137^*$ & $89^*$ \\
    \num{45056}  & \num{385442}  & 1.4e-6 & 1.3e-3 & 2.2e-13 & 2.3e-13 & 164 & 103 & 155 & 98 \\
    \num{180224} & \num{1536834} & 1.7e-7 & 3.4e-4 & 4.4e-13 & 1.3e-13 & 166 & 104 & 157 & 99 \\
    \hline
    \multicolumn{10}{c}{EDG-HDG, $\nu = 10^{-6}$} \\
    \hline
    \num{11264}  & \num{96978}   & 1.1e-5 & 4.4e-4 & 1.0e-10 & 2.8e-9  & 184 & 102 & 174 & 97  \\
    \num{45056}  & \num{385442}  & 1.4e-6 & 1.1e-4 & 1.0e-10 & 9.3e-10 & 187 & 104 & 177 & 100 \\
    \num{180224} & \num{1536834} & 1.9e-7 & 2.7e-5 & 1.1e-10 & 3.8e-10 & 189 & 105 & 179 & 101 \\
    \hline
  \end{tabular}
  }
  \label{tab:performance_nu}
\end{table}

\subsection{Performance comparison}
\label{ss:performancecomp}

In this final section we compare the performance of
$\mathbb{P}_{\bar{M}}$ and $\mathbb{P}_{BAB}$, in which we choose
$(\bar{R}^d)^{-1} = (\bar{A}_{\gamma})^{MG}$, with $\gamma = 0$ and
$\gamma = 0.1$, to the performance of the $\mathbb{P}_{3\times 3}$
preconditioner. We do this for the HDG, EDG, and EDG-HDG
discretiations. As test case we consider the Stokes problem on the
three-dimensional domain $\Omega = [-1, 1]^3$, with Dirichlet boundary
conditions and a source term such that the exact solution to the
Stokes problem is given by
\begin{equation}
  u=\pi
  \begin{bmatrix}
    \sin(\pi x_1)\cos(\pi x_2) - \sin(\pi x_1)\cos(\pi x_3) \\
    \sin(\pi x_2)\cos(\pi x_3) - \sin(\pi x_2)\cos(\pi x_1) \\
    \sin(\pi x_3)\cos(\pi x_1) - \sin(\pi x_3)\cos(\pi x_2)
  \end{bmatrix},
  \
  p = \sin(\pi x_1)\sin(\pi x_2)\sin(\pi x_3) - \tfrac{8}{\pi^3}.
\end{equation}

In the implementation of the application of $\mathbb{P}_{\bar{M}}$ and
$\mathbb{P}_{BAB}$ we now approximate $\bar{M}^{-1}$ by
$(\bar{M})^{MG}$ and $(B_{\bar{p}u}A_{uu}^{-1}B_{\bar{p}u}^T)^{-1}$ by
$(B_{\bar{p}u}A_{uu}^{-1}B_{\bar{p}u}^T)^{MG}$ as this is slightly
more efficient than using a direct solver for these terms.

In \cref{tab:performance} we list the $L^2$-norm of the velocity
error, pressure error, the cell-wise divergence of the velocity, and
maximum value of the jump of the normal component of the velocity
across facets. These errors are computed on a mesh consisting of
$\num{33536}$ tetrahedra. We also include the CPU time to convergence.

Considering first the error in the $L^2$-norm, for all discretizations
we observe that the $L^2$-norm of the error in element velocity $u_h$
and element pressure $p_h$ are identical when using the two-field
\cref{eq:condensed2form_up} and three-field \cref{eq:condensed3form}
reduced systems. The error in the divergence of the element velocity,
however, is different. In the three-field reduced formulation the
divergence of the element velocity is of the order of accuracy at
which the MINRES method was terminated. This implies that would we
want $\nabla\cdot u_h$ on elements to be of the order of machine
precision, the stopping criteria of the MINRES method would need to be
of the order of machine precision. On the other hand, the error in the
divergence of the element velocity is of the order of machine
precision when using the two-field reduced formulation. This is due to
the element-wise projection $\mathcal{P}$ in
\cref{eq:projection}. Indeed, as we saw previously in
\cref{ss:matrixformulation},
$u = \mathcal{P}A_{uu}^{-1}(L_u - A_{\bar{u}u}^T\bar{u} -
B_{\bar{p}u}^T\bar{p}) \in \Ker\, B_{pu}$. Furthermore, for all
discretizations considered here, we have that $u \in \Ker\, B_{pu}$
implies $\nabla \cdot u_h = 0$ pointwise on each element. The error in
the divergence of the velocity therefore does not depend on the
stopping criteria used for MINRES. Finally, we note that the error in
the jump of the normal component of the velocity using the EDG
discretization is magnitudes larger than when using HDG or EDG-HDG
(which are of the order of the stopping criteria of the MINRES
method). This is independent of whether the two-field or three-field
reduced systems are solved and independent of which preconditioner is
used. The higher error in the EDG method is expected as it is the only
discretization that is not pressure-robust \cite{Rhebergen:2020}.

Considering now the CPU time and number of iterations required for
convergence, we observe for all discretizations that using $\gamma
= 0.1$ in $\mathbb{P}_{\bar{M}}$ and $\mathbb{P}_{BAB}$ results in fewer
iterations than when using $\gamma = 0$. However, the CPU time to
convergence using $\gamma = 0.1$ is greater than when using $\gamma =
0$. This is in part due to the construction of
$\widehat{A}_{\gamma}^{-1}$ in \cref{eq:Am_definition}, slowing down the
construction of the preconditioner.

Consider now the case when $\gamma = 0$ in $\mathbb{P}_{\bar{M}}$ and
$\mathbb{P}_{BAB}$. We then observe for the HDG discretization that both
two-field preconditioners outperform the three-field preconditioner
$\mathbb{P}_{3 \times 3}$ both in number of iterations (up to 27\% fewer
iterations) and CPU time (up to 32\% faster). For the EDG and EDG-HDG
discretizations the performance is even better; note, for example, that
using $\mathbb{P}_{\bar{M}}$ for the EDG discretization is up to 43\%
faster and $\mathbb{P}_{BAB}$ is up to 51\% faster than using the
$\mathbb{P}_{3 \times 3}$ preconditioner.

\begin{table}
  \caption{ Errors in the $L^2$-norm, CPU times and iteration counts
    for preconditioned MINRES for the relative preconditioned residual
    norm to reach a tolerance of $10^{-8}$ for the three dimensional
    test case described in \cref{ss:performancecomp}. Here HDG
    $\mathbb{P}_{3\times 3}$ means that we solve
    \cref{eq:condensed3form} using MINRES with preconditioner
    $\mathbb{P}_{3\times 3}$. By HDG $\mathbb{P}_{\bar{M}}^{\gamma=0}$
    we mean that we solve \cref{eq:condensed2form_up} using MINRES
    with preconditioner $\mathbb{P}_{\bar{M}}$ in which we choose
    $(\bar{R}^d)^{-1} = (\bar{A}_{\gamma})^{MG}$, with $\gamma=0$. The
    other methods are described similarly. The results, except
    $\norm{\nabla\cdot u_h}_{\Omega}$ and $|\jump{u_h\cdot n}|$, have
    been normalized with respect to the HDG method using
    $\mathbb{P}_{3\times 3}$ (in brackets).}
    \centering {\fontsize{5.5}{10}\selectfont
  \begin{tabular}{c|cccc|ccc}
    Method & $\norm{u-u_h}_{\Omega}$ & $\norm{p-p_h}_{\Omega}$ & $\norm{\nabla\cdot u_h}_{\Omega}$ & $|\jump{u_h\cdot n}|$ & CPU & Its & DOFs \\
    \hline
    HDG $\mathbb{P}_{3\times 3}$ & 1 (2.5e-4) & 1 (3.3e-2) & 2.1e-6 & 2.4e-8 & 1 (1171s) & 1 (176) & 1 (\num{1789952}) \\
    HDG $\mathbb{P}_{\bar{M}}^{\gamma=0}$ & 1 & 1 & 1.7e-13 & 2.5e-8   & 0.68 & 0.73 & 0.93 \\
    HDG $\mathbb{P}_{\bar{M}}^{\gamma=0.1}$ & 1 & 1 & 1.7e-13 & 2.6e-8 & 1.60 & 0.68 & 0.93 \\
    HDG $\mathbb{P}_{BAB}^{\gamma=0}$ & 1 & 1 & 1.7e-13 & 1.4e-8      & 0.82 & 0.76 & 0.93 \\
    HDG $\mathbb{P}_{BAB}^{\gamma=0.1}$ & 1 & 1 & 1.7e-13 & 1.3e-8     & 1.88 & 0.71 & 0.93 \\
    \hline
    EDG $\mathbb{P}_{3\times 3}$          & 1.84 & 3.00 & 4.0e-6 & 1.3e-5  & 0.063  & 0.68 & 0.18 \\
    EDG $\mathbb{P}_{\bar{M}}^{\gamma=0}$   & 1.84 & 3.00 & 1.7e-13 & 1.3e-5 & 0.036  & 0.43  & 0.11 \\
    EDG $\mathbb{P}_{\bar{M}}^{\gamma=0.1}$ & 1.84 & 3.00 & 1.7e-13 & 1.3e-5 & 0.078  & 0.40  & 0.11 \\
    EDG $\mathbb{P}_{BAB}^{\gamma=0}$      & 1.84 & 3.00 & 1.7e-13 & 1.3e-5 & 0.031  & 0.35  & 0.11 \\
    EDG $\mathbb{P}_{BAB}^{\gamma=0.1}$    & 1.84 & 3.00 & 1.7e-13 & 1.3e-5 & 0.069  & 0.34  & 0.11 \\
    \hline
    EDG-HDG $\mathbb{P}_{3\times 3}$          & 1.96 & 4.24 & 3.6e-6  & 6.0e-8 & 0.094 & 0.85 & 0.39 \\
    EDG-HDG $\mathbb{P}_{\bar{M}}^{\gamma=0}$   & 1.96 & 4.24 & 1.7e-13 & 4.9e-8 & 0.054 & 0.56 & 0.31 \\
    EDG-HDG $\mathbb{P}_{\bar{M}}^{\gamma=0.1}$ & 1.96 & 4.24 & 1.8e-13 & 4.9e-8 & 0.11  & 0.53 & 0.31 \\
    EDG-HDG $\mathbb{P}_{BAB}^{\gamma=0}$      & 1.96 & 4.24 & 1.8e-13 & 2.1e-8 & 0.068 & 0.38 & 0.31 \\
    EDG-HDG $\mathbb{P}_{BAB}^{\gamma=0.1}$    & 1.96 & 4.24 & 1.8e-13 & 2.0e-8 & 0.11  & 0.37 & 0.31 \\
    \hline
  \end{tabular}
  }
  \label{tab:performance}
\end{table}

\section{Conclusions}
\label{sec:conclusions}

The linear system of an HDG discretization of the Stokes equations can
efficiently be statically condensed in two ways: (i) eliminating the
degrees-of-freedom associated to the element approximation of the
velocity (the three-field reduced formulation); or (ii) eliminating the
degrees-of-freedom associated to the element approximation of both the
velocity and the pressure (the two-field reduced formulation). In our
previous work we proposed an optimal preconditioner for the three-field
reduced formulation. In this paper we proposed and analyzed a
preconditioner for the two-field reduced formulation. In the two-field
reduced formulation the lifting of the trace velocity to the elements is
algebraically imposed to be divergence-free. Although this complicates
the analysis, it has been shown that the trace pressure Schur complement
is spectrally equivalent to a simple trace pressure mass matrix and we
used this to introduce a new preconditioner. Numerical examples in two
and three dimensions show that the new preconditioner is more efficient
for solving a hybridized discretization of the Stokes problem than our
previous preconditioner.

\bibliographystyle{plainnat}
\bibliography{references}
\appendix
\section{Eliminating the velocity element degrees-of-freedom}
\label{ss:elim_vel}

Instead of eliminating both the velocity and pressure element
degrees-of-freedom from \cref{eq:fullmatrix} it is possible also to
eliminate only the velocity element degrees-of-freedom $u$. This results
in the \emph{three-field reduced system}:
\begin{equation}
  \label{eq:condensed3form}
  \begin{bmatrix}
    \bar{A} & -A_{\bar{u}u}A_{uu}^{-1}B_{pu}^T & -A_{\bar{u}u}A_{uu}^{-1}B_{\bar{p}u}
    \\
    -B_{pu}A_{uu}^{-1}A_{\bar{u}u}^T & -B_{pu}A_{uu}^{-1}B_{pu}^T
        & -B_{pu}A_{uu}^{-1}B_{\bar{p}u}^T
    \\
    -B_{\bar{p}u}A_{uu}^{-1}A_{\bar{u}u}^T & -B_{\bar{p}u}A_{uu}^{-1}B_{pu}^T
        & -B_{\bar{p}u}A_{uu}^{-1}B_{\bar{p}u}^T
  \end{bmatrix}
  \begin{bmatrix}
    \bar{u} \\ p \\ \bar{p}
  \end{bmatrix}
  =
  \begin{bmatrix}
    L_{\bar{u}} - A_{\bar{u}u}A_{uu}^{-1}L_u \\
    -B_{pu}A_{uu}^{-1}L_u \\
    -B_{\bar{p}u}A_{uu}^{-1}L_u
  \end{bmatrix},
\end{equation}
where $\bar{A} = -A_{\bar{u}u}A_{uu}^{-1}A_{\bar{u}u}^T +
A_{\bar{u}\bar{u}}$. Given the trace velocity $\bar{u}$, the element
pressure $p$, and the trace pressure $\bar{p}$, the element velocity $u$
can be computed element-wise in a post-processing step.

In \cite{Rhebergen:2018b} we developed a preconditioner for
\cref{eq:condensed3form}. We proved \cite[Theorem 2]{Rhebergen:2018b}
that
\begin{equation}
  \label{eq:P33}
  \mathbb{P}_{3\times 3}
  =
  \begin{bmatrix}
    \bar{R} & 0 & 0 \\ 0 & M & 0 \\ 0 & 0 & \bar{M}
  \end{bmatrix},
\end{equation}
is an optimal preconditioner for~\cref{eq:condensed3form} provided
$\bar{R}$ is an operator spectrally equivalent to $\bar{A}$. As
discussed in \cref{ss:char_barRd}, $\bar{A}$ is an $H^1$-like operator
motivating the use of multigrid for $\bar{R}^{-1}$, i.e., we set
$\bar{R}^{-1} = (\bar{A})^{MG}$ (see also
\cite[Section~3.3]{Rhebergen:2018b}).

\end{document}